\newcommand{\p}{\partial}
\newcommand{\e}{\varepsilon}
\newcommand{\eps}{\varepsilon}
\newcommand{\be}{\begin{equation}}
\newcommand{\ba}{\begin{aligned}}
\newcommand{\bee}{\begin{equation*}}
\newcommand{\ee}{\end{equation}}
\newcommand{\ea}{\end{aligned}}
\newcommand{\eee}{\end{equation*}}
\newcommand{\bea}{\begin{equation} \begin{aligned} }
\newcommand{\eea}{\end{aligned}\end{equation} }
\newcommand{\abs}[1]{\lvert#1\rvert}
\newcommand{\Lap}{\Delta}
\theoremstyle{plain}
\newtheorem{theorem}{Theorem}[section]
\newtheorem{corollary}[theorem]{Corollary}
\newtheorem{claim}{Claim}[section]
\theoremstyle{remark}
\theoremstyle{definition}
\newtheorem{definition}[theorem]{Definition}
\numberwithin{equation}{section}
\begin{document}
\title{Ricci limit flows and weak solutions}
\author{Beomjun Choi and Robert Haslhofer}

\begin{abstract}
In this paper we reconcile several different approaches to Ricci flow through singularities that have been proposed over the last few years by Kleiner-Lott, Haslhofer-Naber and Bamler. 
Specifically, we prove that every noncollapsed limit of Ricci flows, as provided by Bamler's precompactness theorem, as well as every singular Ricci flow from Kleiner-Lott, is a weak solution in the sense of Haslhofer-Naber. We also generalize all path-space estimates from Haslhofer-Naber to the setting of noncollapsed Ricci limit flows.\\
The key step to establish these results is a new hitting estimate for Brownian motion. A fundamental difficulty, in stark contrast to all prior hitting estimates in the literature, is the lack of lower heat kernel bounds under Ricci flow. To overcome this, we introduce a novel approach to hitting estimates that compensates for the lack of lower heat kernel bounds by making use of the heat kernel geometry of space-time.
\end{abstract}

\maketitle

\section{Introduction}

A family of Riemannian metrics $(g_t)_{t\in I}$, say on a closed $n$-dimensional manifold $M$, evolves by Ricci flow if
\begin{equation}
\partial_t g_t = -2\mathrm{Rc}(g_t).
\end{equation}

\bigskip

In a recent breakthrough \cite{bamler2020entropy,bamler2020compactness,bamler2020structure}, Bamler established a precompactness and partial regularity theory.  The limits provided by his precompactness theorem are so-called metric flows. A metric flow
 \begin{equation}
\mathcal{X}=\left(\mathcal{X},\mathfrak{t},(d_t)_{t\in I},(\nu_{x;s})_{x\in \mathcal{X}, s\in I,s\leq \mathfrak{t}(x)}\right),
\end{equation}
is given by a set $\mathcal{X}$, a time-function $\mathfrak{t}:\mathcal{X}\to \mathbb{R}$, complete separable metrics $d_t$ on the time-slices $\mathcal{X}_t=\mathfrak{t}^{-1}(t)$, and probability measures $\nu_{x;s}\in \mathcal{P}(\mathcal{X}_s)$ such that the Kolmogorov consistency condition and a certain sharp gradient estimate for the heat flow hold (see Section \ref{subsec_prelim_limit_flows} for details). In particular, any smooth Ricci flow can of course be viewed as metric flow by choosing $\mathcal{X}=M\times I$, defining $\mathfrak{t}$ as the projection on $I$, letting $d_t$ be the induced metrics on time slices, and setting $\nu_{x;s}$ to be the conjugate heat kernel measure based at $x=(p,t)$, i.e.
\begin{equation}
d\nu_{(p,t);s}(q) = K(p,t;q,s)\, d\mathrm{Vol}_{g_s}(q),
\end{equation}
where $K(p,t;q,s)$ is the heat kernel of the Ricci flow (specifically, $K$ solves the forwards heat equation as a function of $(p,t)$ and the conjugate heat equation as a function of $(q,s)$).\\
Under the noncollapsing assumption that the Nash entropy is bounded below, which is of course perfectly natural in light of Perelman's monotonicity formula \cite{Per1}, Bamler proved that the singular set $\mathcal{S}\subset\mathcal{X}$ of the limit flow has parabolic $\ast$-Minkowski dimension at most $n-2$.

\bigskip

In a different direction, a notion of weak solutions for the Ricci flow has been proposed a few years earlier by Naber and the second author. Specifically, it has been shown in \cite{HaslhoferNaber} that a smooth family of Riemannian metrics $(g_t)_{t\in I}$ evolves by Ricci flow if and only if the sharp infinite dimensional gradient estimate
\begin{equation}\label{char_sol}
|\nabla_p\mathbb{E}_{(p,t)}[F]| \leq \mathbb{E}_{(p,t)}[|\nabla^\parallel F|] 
\end{equation}
holds for all cylinder functions $F$ on the path space of its space-time. Here, $\mathbb{E}_{(p,t)}$ denotes the expectation with respect to the Wiener measure of Brownian motion starting at $(p,t)$, and $\nabla^\parallel$ denotes the parallel gradient, which is defined via a suitable stochastic parallel transport. Based on this characterization it has been proposed that a possibly singular space equipped with a time-function and a linear heat flow should be called a weak solution of the Ricci flow if and only if the sharp infinite dimensional gradient estimate on path space holds for almost every point $(p,t)$.
\bigskip

The goal of the present paper is to reconcile these two approaches. As we will explain in detail in Section \ref{sec_BM_stoch}, any noncollapsed Ricci limit flow $\mathcal{X}$ can be canonically equipped with a notion of Brownian motion and stochastic parallel transport. For now, let us just mention that Brownian motion $X_\tau$ starting at $x\in \mathcal{X}$ is simply characterized by the formula
\begin{equation}\label{def_Brownian_intro}
\mathbb{P}_x[ X_{\tau_1} \in B_1,\ldots,X_{\tau_k} \in B_k] 
=  \int_{B_1\times \ldots\times B_k}d\nu_{x;\mathfrak{t}(x)-\tau_1}(x_1)\ldots d\nu_{x_{k-1};\mathfrak{t}(x)-\tau_k}(x_k).
\end{equation}
Using these notions, we can now state our main theorem:
\begin{theorem}[Ricci limit flows and weak solutions]\label{thm_main_intro}
Given any noncollapsed Ricci limit flow $\mathcal{X}$, for any regular point $x=(p,t)$ we have the infinite dimensional gradient estimate
\begin{equation}\label{char_sol_thm}
|\nabla_p\mathbb{E}_{(p,t)}[F]| \leq \mathbb{E}_{(p,t)}[|\nabla^\parallel F|] 
\end{equation}
for all cylinder functions $F$. In particular, any noncollapsed limit of Ricci flows, as provided by Bamler's precompactness theorem, is a weak solution of the Ricci flow in the sense of Haslhofer-Naber.
\end{theorem}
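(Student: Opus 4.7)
The natural strategy is to deduce the infinite-dimensional gradient estimate \eqref{char_sol_thm} on the limit $\mathcal{X}$ from the Haslhofer--Naber characterization \eqref{char_sol} on approximating smooth Ricci flows $(M_i,g_i(t))$ whose $\mathbb{F}$-limit is $\mathcal{X}$, by passing to the limit at the regular point $x=(p,t)$. The main technical task is then to show that each side of \eqref{char_sol} converges to the corresponding side on $\mathcal{X}$; the harder half is the right-hand side, because of the stochastic parallel transport it involves.

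For the left-hand side, I would use Bamler's partial regularity: there is a parabolic neighborhood $U$ of $(p,t)$ on which the $g_i$ converge smoothly to a Ricci flow. Choose points $p_i\in M_i$ with $(p_i,t)\to (p,t)$. Bamler's convergence theorem for conjugate heat kernels gives $\nu_{(p_i,t);t-\tau}\to\nu_{x;t-\tau}$ weakly for each fixed $\tau$, and similarly for the iterated convolutions appearing in the expectation. Expanding
\bee
\mathbb{E}_{(p,t)}[F]=\int F(x_1,\ldots,x_k)\,d\nu_{x;t-\tau_1}(x_1)\,d\nu_{x_1;t-\tau_2}(x_2)\cdots d\nu_{x_{k-1};t-\tau_k}(x_k),
\eee
and using the smooth convergence on $U$ to differentiate in $p$, one obtains $\nabla_{p_i}\mathbb{E}_{(p_i,t)}[F_i]\to\nabla_p\mathbb{E}_{(p,t)}[F]$ for suitable lifts $F_i$ of $F$ to path space on $M_i$.

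For the right-hand side, the parallel gradient $\nabla^\parallel F$ depends on stochastic parallel transport along Brownian paths, whose construction on $\mathcal{X}$ is to be carried out in Section~\ref{sec_BM_stoch}. The plan is to couple the Brownian motions on $M_i$ with Brownian motion on $\mathcal{X}$ so that the stochastic parallel transports converge. Bamler's bound on the singular set (parabolic $\ast$-Minkowski dimension at most $n-2$) is decisive here: it ensures that Brownian motion on $\mathcal{X}$ avoids $\mathcal{S}$ almost surely, so parallel transport along a generic path makes sense as a limit of parallel transports defined on the regular part by classical SDE theory. Combined with the coupling, this yields $\mathbb{E}_{(p_i,t)}[|\nabla^\parallel F_i|]\to\mathbb{E}_{(p,t)}[|\nabla^\parallel F|]$, and one concludes by taking the limit in \eqref{char_sol}.

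The hardest step I anticipate is justifying the convergence of the stochastic parallel transport. On each smooth $M_i$ it is defined by an SDE in the orthonormal frame bundle driven by the Brownian motion; on $\mathcal{X}$ there is no frame bundle at singular points, so one must argue indirectly --- for instance via a martingale characterization combined with a stopping-time argument exploiting that the Brownian motion stays in the regular part, or by first constructing the parallel transport on the regular set and then extracting tightness and subsequential limits. The fact that $\mathcal{S}$ has parabolic codimension at least two is precisely what makes it polar for Brownian motion, and is ultimately what makes the limit passage possible.
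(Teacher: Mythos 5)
Your strategy is genuinely different from the paper's, and it has a significant gap that the paper's approach is specifically designed to avoid.

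The paper does \emph{not} pass the gradient estimate from approximating smooth flows $(M_i,g_i)$ to the limit. It works \emph{intrinsically} on $\mathcal{X}$: it defines Brownian motion and stochastic parallel transport directly on the limit flow (Section~\ref{sec_BM_stoch}), and then proves a Feynman--Kac type representation formula
\begin{equation*}
\mathrm{grad}_{g_{\mathfrak{t}(x)}} v (x) = \mathbb{E}_x \left[ P_{\tau_1} \mathrm{grad}_{g_{\mathfrak{t}(x)-\tau_1}} f\right]
\end{equation*}
for heat flows $v$ on $\mathcal{X}$ (Claim~\ref{claim_FK}). The proof of that claim truncates by a cutoff $\eta_\eps$ supported away from $\mathcal{S}_{\eps/2}$, applies Ito's formula on the frame bundle, and shows the error term vanishes as $\eps\to 0$. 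The crucial input there is not merely that $\mathcal{S}$ is polar for Brownian motion, but the \emph{quantitative occupancy time estimate} of Corollary~\ref{cor_occ_time}: the expected time spent in $\mathcal{S}_\eps$ is $O(\eps^{4-\delta})$, which beats the $\eps^{-2}$ blow-up of $|\nabla^2\eta_\eps|$. Once the Feynman--Kac formula holds on $\mathcal{X}$, the gradient estimate follows by the induction on $k$ from \cite{HaslhoferNaber}, with no reference to the approximating sequence. This is precisely what makes the paper's argument apply to any noncollapsed metric flow with Bamler's partial regularity properties (such as Kleiner--Lott singular flows), not just to limits of smooth flows.

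The gap in your proposal is step three, the convergence $\mathbb{E}_{(p_i,t)}[|\nabla^\parallel F_i|]\to\mathbb{E}_{(p,t)}[|\nabla^\parallel F|]$. You correctly flag it as the hardest step, but it is harder than you indicate. $\mathbb{F}$-convergence yields weak convergence of conjugate heat kernel measures, hence of finite-dimensional marginals of the Wiener measures; but there is no off-the-shelf coupling of the Brownian motions on $M_i$ with the Brownian motion on $\mathcal{X}$, and no off-the-shelf theorem giving convergence of the frame-bundle SDE solutions $U^i_\tau$ to $U_\tau$. The Brownian paths on $M_i$ can enter regions that degenerate in the limit, and while they do so with small probability, the parallel transport accumulated during such excursions can be large; controlling this requires exactly the kind of quantitative hitting-time/occupancy-time estimate that the paper develops in Section~\ref{sec_hitting}. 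In other words, you would need to re-derive the hitting estimate anyway, and then face the additional (and avoidable) difficulty of comparing SDEs on two different, only weakly close, space-times. The intrinsic approach sidesteps all of this by never comparing two parallel transports---only the parallel transport on $\mathcal{X}$ against itself, via the cutoff argument. If you want to pursue your limiting strategy, you would at minimum need to (i) formulate and prove a statement about uniform convergence of horizontal lifts under locally smooth geometric convergence away from a uniformly polar set, and (ii) still establish the occupancy-time estimate to control the contribution from near-singular excursions; at that point you will have done more work than the intrinsic proof requires.
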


In fact, our argument applies to any noncollapsed metric flow that satisfies the partial regularity properties from \cite{bamler2020structure} and solves the Ricci flow equation on its regular part.
In particular, viewing any singular Ricci flow from Kleiner-Lott \cite{KL1} as a metric flow as in \cite[Section 3.7]{bamler2020compactness}, we can confirm a prediction from \cite{HaslhoferNaber}:

\begin{corollary}[singular Ricci flows and weak solutions]
Every singular Ricci flow in the sense of Kleiner-Lott is a weak solution of the Ricci flow in the sense of Haslhofer-Naber.
\end{corollary}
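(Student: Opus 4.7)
The plan is to deduce the corollary from the extension of Theorem \ref{thm_main_intro} highlighted in the paragraph immediately following its statement: namely, that the argument applies to any noncollapsed metric flow satisfying the partial regularity properties from \cite{bamler2020structure} and solving the Ricci flow equation on its regular part. Thus the task reduces to verifying these three hypotheses for the metric flow canonically associated to a Kleiner-Lott singular Ricci flow $\mathcal{M}$ via the construction in \cite[Section 3.7]{bamler2020compactness}.

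First I would unpack the metric flow structure coming out of that construction: $\mathcal{X}$ is the underlying space-time of $\mathcal{M}$, $\mathfrak{t}$ is the natural time function, the $d_t$ are induced by the smooth Riemannian metrics on the smooth time-slices, and $\nu_{x;s}$ is the conjugate heat kernel measure constructed directly on $\mathcal{M}$. Two of the three hypotheses are then essentially built into the Kleiner-Lott framework. The Ricci flow equation on the regular part holds by the very definition of a singular Ricci flow, whose regular part is a smooth solution. Noncollapsing on bounded time windows is a consequence of the monotonicity of Perelman's entropy along Kleiner-Lott singular Ricci flows, which yields the required uniform lower bound on the Nash entropy based at any point $(p,t)$.

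The step I expect to require the most care is the verification of the partial regularity input from \cite{bamler2020structure}, in particular the bound on the parabolic $\ast$-Minkowski dimension of the singular set by $n-2=1$ together with the existence and the expected form of the tangent flows. In the three-dimensional Kleiner-Lott setting this should be accessible: the singular set is controlled by standard solutions and cylindrical models via Perelman's classification of three-dimensional $\kappa$-solutions, and one needs to match these local blow-up models to Bamler's list of admissible tangent flows and translate the geometric smallness of the singular set into the parabolic $\ast$-Minkowski bound. With the partial regularity in hand, the extended form of Theorem \ref{thm_main_intro} applies directly and yields the gradient estimate \eqref{char_sol_thm} at every regular point of $\mathcal{M}$. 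Since the regular set has full measure in each time slice of a singular Ricci flow, this establishes the defining property of a weak solution in the sense of Haslhofer-Naber.
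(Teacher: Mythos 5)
Your approach coincides with the paper's implicit proof: the paper gives no separate argument for the corollary but derives it in one line from the remark immediately preceding it (that the proof of Theorem \ref{thm_main_intro} applies to any noncollapsed metric flow satisfying the partial regularity properties from \cite{bamler2020structure} and solving the Ricci flow equation on its regular part) together with the identification of Kleiner-Lott singular Ricci flows with metric flows from \cite[Section 3.7]{bamler2020compactness}. Your reduction to verifying these hypotheses, and your verification of the first two, is exactly in that spirit; one small correction is that the noncollapsing hypothesis is a single Nash entropy lower bound at a base point (with respect to which the conjugate heat flow of the metric flow pair is based), not a bound ``at any point $(p,t)$.''

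Where I would push back is on your handling of the partial regularity hypothesis. You correctly flag it as the crux, but the route you sketch --- re-deriving the parabolic $\ast$-Minkowski codimension-$4$ bound from Perelman's classification of three-dimensional $\kappa$-solutions and then ``translating geometric smallness into the $\ast$-Minkowski bound'' --- is both not what the paper does and would amount to re-proving a substantial portion of \cite{bamler2020structure}; as written it is a heuristic, not a proof, and it is not clear it would produce the precise quantitative covering estimate \eqref{bam_codim4} that the hitting-time argument requires. The intended justification is simply that the partial regularity package of \cite{bamler2020structure} is already available once a singular Ricci flow is regarded as a noncollapsed metric flow via \cite[Section 3.7]{bamler2020compactness}, so one should cite that directly rather than re-derive it. Granting that, the remainder of your proposal --- the gradient estimate \eqref{char_sol_thm} at every regular point, hence almost everywhere since $\mathcal{S}$ has parabolic $\ast$-Minkowski dimension at most $n-2$ --- is correct and matches the paper.
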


As another important consequence of Theorem \ref{thm_main_intro} (Ricci limit flows and weak solutions) all other path-space estimates for smooth flows from \cite{HaslhoferNaber} generalize to the path-space of noncollapsed Ricci limit flows as well:
\begin{corollary}[estimates on path-space of Ricci limit flows]
The following estimates hold on path-space of any noncollapsed Ricci limit flow $\mathcal{X}$:
\begin{itemize}
\item For every cylinder function $F$ the induced martingale $F_\tau$ for almost every $(p,t)\in\mathcal{X}$ satisfies the quadratic variation estimate
\begin{equation}
 \mathbb{E}_{(p,t)} \left[\frac{d[F_\bullet]_\tau}{d\tau} \right] \leq 2 \mathbb{E}_{(p,t)} \left[\abs{\nabla^\parallel_\tau F}^2\right].
\end{equation}
\item For almost every $(p,t)\in\mathcal{X}$ the  Ornstein-Uhlenbeck operator on path-space, $\mathcal{L}_{\tau_1,\tau_2}=\int_{\tau_1}^{\tau_2} \nabla^{\parallel\ast}_\tau \, \nabla^\parallel_\tau d\tau$, satisfies the log-Sobolev inequality
\begin{multline}
 \mathbb{E}_{(p,t)} \left[  \left((F^2)_{\tau_2} \log\, (F^2)_{\tau_2} - (F^2)_{\tau_1} \log\, (F^2)_{\tau_1} \right)\right]\\
 \leq 4 \mathbb{E}_{(p,t)} \left[ \langle F,\mathcal{L}_{\tau_1,\tau_2} F\rangle \right].
\end{multline}
\item For almost every $(p,t)\in\mathcal{X}$ the  Ornstein-Uhlenbeck operator on path-space satisfies the spectral gap estimate
\begin{equation}
 \mathbb{E}_{(p,t)} \left[  (F_{\tau_2}-F_{\tau_1})^2 \right]\leq 2  \mathbb{E}_{(p,t)} \left[ \langle F,\mathcal{L}_{\tau_1,\tau_2} F\rangle \right].
\end{equation}
\end{itemize}
\end{corollary}
Indeed, once the gradient estimate \eqref{char_sol_thm} is established, all other path-space estimates follow arguing similarly as in \cite[Section 4]{HaslhoferNaber}.\\

The key for proving Theorem \ref{thm_main_intro} (Ricci limit flows and weak solutions) is a new hitting estimate for the Ricci flow. For exposition sake, let us first discuss this estimate in the context of smooth Ricci flows. To this end, let $(g_t)_{t\in (t_0-2r^2,t_0]}$ be a Ricci flow on a closed $n$-dimensional manifold $M$, and recall that the Nash entropy based at $(p_0,t_0)$ is defined by
\begin{equation}
 \mathcal{N}_{(p_0,t_0)}(\tau) := - \int_M \log K(p_0,t_0;\cdot,t_0-\tau)\, d\nu_{(p_0,t_0);t_0-\tau } -\frac{n}{2}(1+\log(4\pi \tau)).
\end{equation}
Recall also that, given any $\eps>0$, the quantitative singular set is defined by
\begin{equation}
\mathcal{S}_\eps = \{ (p,t) \, : \, r_{\mathrm{Rm}}(p,t)\leq \eps\},
\end{equation}
where $r_{\textrm{Rm}}(p,t)$ is the largest $r$ such that $|\mathrm{Rm}|\leq r^{-2}$ on the backwards parabolic ball $P_{-}(p,t;r)$.

\begin{theorem}[hitting estimate for the Ricci flow]\label{thm_hitting_intro}
For all $Y<\infty$, $\delta>0$, and $r_0\in (0,r/2)$, there exists a constant $C=C(n,Y,\delta,r_0,r)<\infty$, such that if $(g_t)_{t\in (t_0-2r^2,t_0]}$ is a Ricci flow with $\mathcal{N}_{(p_0,t_0)}(r_0^2) \ge -Y$ and $r_{\mathrm{Rm}}(p_0,t_0)\geq r_0$, then Brownian motion $X_\tau$ starting at $(p_0,t_0)$ satisfies
\be\label{eq-hittingprobRF} \mathbb{P}_{(p_0,t_0)}\big[\textrm{$X_\tau$ hits $\mathcal{S}_\eps \cap P^*(p_0,t_0; r)$ for some $\tau \in[0,r^2]$}\big]\le C \eps^{2-\delta}\ee  	
for all $\eps>0$.
\end{theorem}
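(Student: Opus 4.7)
The plan is to combine Bamler's parabolic $\ast$-Minkowski content estimate for the quantitative singular set with his Gaussian conjugate heat kernel bounds, and then convert the resulting occupation-time estimate into a hitting estimate via a short-time oscillation bound for Brownian motion.

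Under the hypotheses $\mathcal{N}_{(p_0,t_0)}(r_0^2) \geq -Y$ and $r_{\mathrm{Rm}}(p_0,t_0) \geq r_0$, Bamler's covering estimate gives a cover of $\mathcal{S}_\eps \cap P^*(p_0, t_0; r)$ by parabolic balls $P^*(x_i, \eps)$, $i = 1, \ldots, N$, with $N \leq C\, \eps^{-(n-2+\delta/2)}$ and $C = C(n,Y,\delta,r_0,r)$. Set $\mathcal{N}^* := \bigcup_i P^*(x_i, 2\eps)$. By Perelman-style noncollapsing, each parabolic ball has space-time volume at most $C \eps^{n+2}$, so the space-time volume of $\mathcal{N}^*$ is at most $C \eps^{4-\delta/2}$. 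The expected parabolic occupation time of $\mathcal{N}^*$ by Brownian motion equals
\begin{equation*}
\mathbb{E}_{(p_0,t_0)}\!\left[\int_0^{r^2}\mathbf{1}_{\mathcal{N}^*}(X_\tau)\,d\tau\right] = \int_{\mathcal{N}^*} K(p_0, t_0; y, \mathfrak{t}(y))\, dV_{g_{\mathfrak{t}(y)}}(y)\, d\mathfrak{t}(y),
\end{equation*}
and since $r_{\mathrm{Rm}}(p_0, t_0) \geq r_0$ confines the relevant integrand to times $\tau \geq c r_0^2$ where $K \leq C(n, r_0)$, this integral is bounded by $C \eps^{4-\delta/2}$.

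To convert this into a hitting estimate, I would argue that once $X_\tau$ enters $\mathcal{S}_\eps \cap P^*(p_0,t_0;r)$ at some time $\sigma$, the process remains inside $\mathcal{N}^*$ for at least a time $c \eps^2$ with probability bounded below. By the strong Markov property this yields $\mathbb{P}[\text{hit}] \cdot c\eps^2 \leq \mathbb{E}[\text{occupation time of }\mathcal{N}^*] \leq C \eps^{4-\delta/2}$, and hence $\mathbb{P}[\text{hit}] \leq C \eps^{2-\delta/2}$, which is the claimed $C\eps^{2-\delta}$ after readjusting $\delta$.

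The main obstacle is the short-time oscillation estimate required in the last step: starting from any point $x$ with $\mathfrak{t}(x)$ near the hitting time $\sigma$, Brownian motion must fail to escape the $C\eps$-parabolic neighborhood of $x$ in time $< c\eps^2$ with probability bounded below. In the smooth Ricci flow setting this follows from standard short-time heat kernel lower bounds near $x$; to make the argument robust enough to extend to the metric flow setting of our main theorem, one should appeal instead to Bamler's sharp gradient bound for the heat flow, which via a martingale maximal inequality applied to a smoothed parabolic distance functional $f(X_\tau)$ supplies the required pathwise oscillation control.
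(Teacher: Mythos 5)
Your plan takes a genuinely different route from the paper's. The paper follows the Benjamini--Pemantle--Peres capacity scheme adapted to space--time: it introduces the averaged energy integral $\mathcal{I}$ in \eqref{eq-234-0_av}, bounds it from above via the hitting-distribution Claim~\ref{claim_gaussian}, bounds it from below by the near-diagonal occupation estimate \eqref{eq-217'}, and closes with Cauchy--Schwarz over a minimal $P^\ast$-cover of $\mathcal{A}$. Your version bypasses the energy integral and Cauchy--Schwarz entirely: you bound the expected occupation time of a tubular $P^\ast$-neighborhood $\mathcal{N}^\ast$ of $\mathcal{S}_\eps\cap P^\ast_-(p_0,t_0;r)$ from above by $C\eps^{4-\delta}$ (covering bound \eqref{bam_codim4}, slice-volume bound \eqref{vol_of_slices}, and the heat-kernel bound \eqref{eq-HKupper} away from $(p_0,t_0)$), bound the post-hitting occupation of $\mathcal{N}^\ast$ from below, and divide via the strong Markov property. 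The raw inputs are identical to the paper's, but the assembly is shorter; the paper's energy argument is more in the spirit of also yielding capacity lower bounds, which are not needed here.

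The one place you go astray is in diagnosing the ``main obstacle.'' You do not need the pathwise statement that $X_\tau$ \emph{stays} inside $\mathcal{N}^\ast$ for a full time $c\eps^2$ with probability bounded below, which would indeed require a maximal inequality. It suffices to have the much weaker first-moment statement that the expected occupation time of $\mathcal{N}^\ast$ over $[0,c\eps^2]$, started from any hitting point $y$, is at least $c'\eps^2$. Writing $\sigma$ for the hitting time, the strong Markov property gives
\begin{equation*}
\mathbb{E}_{(p_0,t_0)}\!\left[\int_0^{r^2+c\eps^2}\mathbf{1}_{\mathcal{N}^\ast}(X_\tau)\,d\tau\right]
\;\geq\; \mathbb{P}_{(p_0,t_0)}[\sigma\leq r^2]\cdot \inf_y \mathbb{E}_y\!\left[\int_0^{c\eps^2}\mathbf{1}_{\mathcal{N}^\ast}(X_\tau)\,d\tau\right],
\end{equation*}
and the inner infimum is a pointwise-in-$\tau$ statement requiring no supremum over $\tau$. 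Indeed, if $y\in P^\ast(x_i,\eps)$ then $P^\ast(y,\eps)\subseteq P^\ast(x_i,2\eps)\subseteq\mathcal{N}^\ast$ by \eqref{eq_p_past_cont}, and for each fixed $\tau\leq c(n)\eps^2$ the $W_1$-monotonicity together with the $H_n$-concentration estimate \eqref{concent_inequ} around an $H_n$-center of $y$ gives $\mathbb{P}_y[X_\tau\in P^\ast(y,\eps)]\geq 1/2$; integrating in $\tau$ yields the required $c'\eps^2$. This is precisely the paper's estimate \eqref{eq-217'}, now used directly at the end of your argument rather than inside the energy integral, and it carries over verbatim to noncollapsed Ricci limit flows with no Doob inequality and no heat-kernel lower bound. (Should you nonetheless want the pathwise stay estimate, the right submartingale is not a smoothed distance but $\tau\mapsto d_{W_1(g_{s_0})}(\nu_{X_\tau;s_0},\nu_{y;s_0})$, which by Kantorovich duality is a supremum of the martingales $u_\phi(X_\tau)-u_\phi(y)$ over heat flows $u_\phi$ with $1$-Lipschitz initial data at time $s_0$; Doob's inequality with the first-moment bound $\sqrt{H_n\tau}$ then gives the stay probability.)
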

Heuristically, one can of course easily guess the (almost) quadratic dependence on $\eps$ in light of Bamler's codimension-4 partial regularity result and the intuition that the image of Brownian curves is 2-dimensional. Indeed, hitting estimates in related easier situations go back all the way to the classical work of Kakutani \cite{Kakutani}. A sharp hitting estimate for Brownian motion in Euclidean space has been obtained by Benjamini-Pemantle-Peres \cite{BPP}.  Recently,  in \cite{ChoiHaslhofer1} we generalized the Benjamini-Pemantle-Peres estimate to the setting of spaces with Ricci curvature bounded below.\\

A fundamental new difficulty in the context of Ricci flow, in stark contrast to all prior hitting estimates in the literature, is that the heat kernel only has upper bounds, but no lower bounds. To overcome this, we introduce a novel approach to hitting estimates. Roughly speaking, we compensate for the lack of lower heat kernel bounds by making use of the heat kernel geometry as introduced in \cite{bamler2020entropy}, including in particular the properties of $H_n$-centers and $P^\ast$ parabolic balls (see Section \ref{sec_not_prel} for a discussion of these notions).\\

Our proof of the hitting estimate also carries through in the more general setting of noncollapsed Ricci limit flows. In particular, we obtain:

\begin{corollary}[Brownian motion on Ricci limit flows]\label{cor_bm_lim} 
If $\mathcal{X}$ is a noncollapsed Ricci limit flow, and $x\in\mathcal{R}\subset\mathcal{X}$ is a regular point, then the Wiener measure $\mathbb{P}_x$ of Brownian motion starting at $x$ concentrates on the space of continuous space-time curves that stay entirely in the regular part $\mathcal{R}$. 
\end{corollary}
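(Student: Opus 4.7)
\emph{Proof plan.} The plan is to combine the hitting estimate (Theorem~\ref{thm_hitting_intro}, in its Ricci limit flow generalization) with the $P^\ast$-concentration of the conjugate heat kernel measures to rule out Brownian trajectories ever entering $\mathcal{S}$, and then invoke standard diffusion regularity on $\mathcal{R}$ to conclude continuity. The core observation is that $\mathcal{S}=\bigcap_{\eps>0}\mathcal{S}_\eps$, so any trajectory that visits $\mathcal{S}$ must visit every quantitative singular set $\mathcal{S}_\eps$.

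Fix a regular base point $x=(p_0,t_0)\in \mathcal{R}$, so that $r_{\mathrm{Rm}}(x)\ge r_0>0$, and fix $T>0$ within the backward temporal extent of $\mathcal{X}$. For each $R\ge \sqrt{T}$, Theorem~\ref{thm_hitting_intro} gives
\[
\mathbb{P}_x\big[X_\tau\in \mathcal{S}_\eps\cap P^\ast(x;R) \text{ for some }\tau\in[0,T]\big]\le C(n,Y,\delta,r_0,R)\,\eps^{2-\delta}
\]
for every $\eps,\delta>0$. Letting $\eps\downarrow 0$ and using $\mathcal{S}\subseteq \mathcal{S}_\eps$ yields, for every such $R$,
\[
\mathbb{P}_x\big[X_\tau\in \mathcal{S}\cap P^\ast(x;R)\text{ for some }\tau\in[0,T]\big]=0.
\]
Splitting on whether the trajectory stays inside $P^\ast(x;R)$ or exits, and using the fact that the conjugate heat kernel mass outside $P^\ast(x;R)$ (hence the probability of $X_\tau$ exiting during $[0,T]$) tends to $0$ as $R\to\infty$, one obtains $\mathbb{P}_x[X_\tau\in \mathcal{S}\text{ for some }\tau\in[0,T]]=0$. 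A countable union over $T\in\mathbb{N}$ upgrades this to $\mathbb{P}_x[X_\tau\in\mathcal{R}\text{ for all }\tau\ge 0]=1$. Conditional on staying in $\mathcal{R}$, the process is just Brownian motion on a smooth time-dependent Riemannian manifold, so the Kolmogorov continuity criterion applied to the smooth heat kernel on $\mathcal{R}$ produces continuous sample paths, as in the usual construction of Brownian motion on a smooth manifold.

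The main obstacle I anticipate is the order of limits, since the constant $C(n,Y,\delta,r_0,R)$ in Theorem~\ref{thm_hitting_intro} is allowed to depend on $R$: one must first send $\eps\downarrow 0$ at each fixed $R$, and only afterward let $R\to\infty$, and this requires an independent quantitative tail estimate ensuring that Brownian excursions out of $P^\ast(x;R)$ have vanishing probability over $[0,T]$ as $R\to\infty$. A cleaner alternative, which I would also consider, is to iterate Theorem~\ref{thm_hitting_intro} via the strong Markov property at a sequence of stopping times $\tau_k$ at which $X_{\tau_k}$ lies at a point of uniformly controlled regularity scale and Nash entropy; the delicate point there is to verify almost sure existence of such restart points and uniformity of the constants independent of $k$.
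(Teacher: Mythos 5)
Your plan follows the paper's proof route: apply the hitting estimate in its limit-flow form, send $\eps\downarrow 0$, conclude almost-sure avoidance of $\mathcal S$, and then invoke smooth diffusion theory on $\mathcal R$ for path continuity. The paper devotes only a couple of sentences to this, so your more explicit version is in the same spirit; a few comments on the points where you and the paper differ.

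First, the tail estimate you flag as the main obstacle is actually unnecessary (and is not as easy as the parenthetical ``hence'' suggests: the relevant process $\tau\mapsto d_{W_1(g_{t_0-R^2})}(\nu_{x;t_0-R^2},\nu_{X_\tau;t_0-R^2})$ is a submartingale whose mean scales like $R$, so a direct Doob-type bound does not give decay as $R\to\infty$). The clean alternative is a countable union: by $W_1$-monotonicity the $P^\ast$ balls are nested, $P^\ast(x;R_1)\subseteq P^\ast(x;R_2)$ for $R_1\le R_2$, and $\bigcup_{R\in\mathbb N}P^\ast(x;R)$ exhausts the entire backward space-time, since for any $(p,t)$ with $t<t_0$ the quantity $d_{W_1(g_{t_0-R^2})}(\nu_{x;t_0-R^2},\nu_{(p,t);t_0-R^2})$ is nonincreasing in $R$ (hence eventually $<R$) while $[t_0-R^2,t_0]$ eventually contains $t$. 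Thus
\[
\{X_\tau\in\mathcal S \text{ for some }\tau\in[0,T]\}=\bigcup_{R\in\mathbb N}\{X_\tau\in\mathcal S\cap P^\ast(x;R)\text{ for some }\tau\in[0,T]\},
\]
and once each term has probability zero (after $\eps\downarrow 0$ at fixed $R$), countable subadditivity finishes it. No excursion control or order-of-limits subtlety enters.

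Second, and more seriously, when $T_\infty<\infty$ your first route does not close, and neither does the paper's terse one-liner: Theorem \ref{thm_hitting_intro} requires the flow to exist on $(t_0-2r^2,t_0]$ and only controls $\tau\in[0,r^2]$ (the factor $2$ is used for the scalar-curvature maximum principle and Bamler's covering/volume estimates), so the largest admissible $r$ satisfies $r^2<\tfrac12(T_\infty+\mathfrak t(x))$, and the union over $R$ stops before exhausting the space-time. Your ``cleaner alternative'' — iterating via the strong Markov property at intermediate times, restarting at $X_{\tau_k}$ and handling the random regularity scale there (e.g.\ by first showing $\mathbb P_x[r_{\mathrm{Rm}}(X_{\tau_k})\le\rho]\le C\rho^{2-\delta}$, then applying the hitting estimate with $r_0=\rho$ and sending $\eps\to0$ before $\rho\to0$; the needed Nash-entropy bound at $X_{\tau_k}$ comes from Bamler's oscillation estimates) — is in fact not optional in the finite-lifetime case. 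You flag this correctly as a delicate point; be aware that it is a genuine necessity rather than an alternative route.
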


Using these results, we can then establish the infinite dimensional gradient estimate on path space by adapting the argument from \cite{HaslhoferNaber} to our setting. Specifically, we first consider the $\mathrm{O}_n$-frame bundle $\pi:\mathcal{F}\to \mathcal{R}$ over the regular part. Recalling that this bundle comes with a distribution of horizontal $(n+1)$-planes induced by Hamilton's space-time connection, we can then construct a process $U_\tau\in \mathcal{F}$ as unique horizontal lift of the Brownian motion $X_\tau\in\mathcal{R}$. Thanks to Corollary \ref{cor_bm_lim} (Brownian motion on Ricci limit flows) the process $U_\tau$ does not explode. This enables us to define the stochastic parallel transport map
\begin{equation}
P_\tau:=U_0U_\tau^{-1}: T_{X_\tau} \mathcal{R}_{\mathfrak{t}(x)-\tau}\to T_x \mathcal{R}_{\mathfrak{t}(x)},
\end{equation}
which in turn allows us to define the parallel gradient of any cylinder function $F(X)=f(X_{\tau_1},\ldots, X_{\tau_k})$ by
\begin{equation}\label{def_par_grad_intro}
\nabla^\parallel F(X) = \sum_{i=1}^k P_{\tau_i} \mathrm{grad}^{(i)}_{g_{\mathfrak{t}(x)-\tau_i}} f(X_{\tau_1},\ldots, X_{\tau_k}).
\end{equation}
Another key step is to show that if $v$ is a heat flow, then its gradient at any regular point $x\in \mathcal{R}$ is given by the Feynman-Kac type representation formula
\begin{equation}
\mathrm{grad}_{g_{t}} v (x) = \mathbb{E}_{x} \left[ P_{t-s} \mathrm{grad}_{g_{t-s}} v|_{\mathcal{R}_s} \right],
\end{equation}
where $t=\mathfrak{t}(x)$. To show this, we localize on $\mathcal{X}\setminus\mathcal{S}_\eps$ via a suitable cutoff function, and then take the limit $\eps\to 0$ using Theorem \ref{thm_hitting_intro} (hitting estimate for the Ricci flow). Finally, after this is established, we check that the rest of the argument from \cite{HaslhoferNaber} goes through with minor adaptions.\\

This article is organized as follows. In Section \ref{sec_hitting}, we prove Theorem \ref{thm_hitting_intro} (hitting estimate for the Ricci flow). In Section \ref{sec_weak}, we prove Theorem \ref{thm_main_intro} (Ricci limit flows and weak solutions).

\bigskip

\noindent\textbf{Acknowledgements.}
The second author has been supported by an NSERC Discovery Grant and a Sloan Research Fellowship.\\

\bigskip

\section{Hitting estimate for Ricci flow}\label{sec_hitting}

\subsection{Notation and preliminaries}\label{sec_not_prel}

Let $(g_t)_{t\in I}$ be a Ricci flow on a closed $n$-dimensional manifold $M$. The \emph{heat kernel} $K(p,t;q,s)$, where $p,q\in M$ and $s<t$ in $I$, is defined by
\begin{equation}
(\partial_t - \Delta_{g_t}) K (\cdot,\cdot;q,s)=0,\qquad \lim_{t\searrow s} K(\cdot,t; q,s)=\delta_q.
\end{equation} 
By duality, as a function of the last two variables this solves the conjugate problem
\bea (-\p_s -\Delta_{g_s} +R_{g_s})K(p,t;\cdot,\cdot) =0 ,\qquad \lim_{s\nearrow t} K(p,t;\cdot,s) = \delta_{p} . \eea 
The \emph{conjugate heat kernel measure} is defined by
\bea d\nu_{(p,t);s}(q) = K(p,t;q,s)d\mathrm{Vol}_{g_s}(q),\qquad d\nu_{(p,t);t}=\delta_p   .\eea  
Note that this is a probability measure. We often write
\bea d\nu_{(p,t);s}(q) = (4\pi\tau)^{-n/2} e^{-f_{(p,t)}(q,s)}d\mathrm{Vol}_{g_s}(q) ,\eea  
where $\tau=t-s$. In terms of the potential $f_{(p,t)}$ the \emph{pointed Nash entropy} is given by
\bea \mathcal{N}_{(p,t)}(\tau) = \int f_{(p,t)}(\cdot,t-\tau ) d\nu_{(p,t);t-\tau } -\frac{n}{2}.  \eea
By Perelman's monotonicity formula \cite{Per1}, the function $\tau\mapsto \tau \mathcal{N}_{(p,t)}(\tau)$ is concave. We also recall from \cite[Proposition 5.2]{bamler2020entropy} that $\tau\mapsto \mathcal{N}_{(p,t)}(\tau)$ is nonincreasing, and hence $\mathcal{N}_{(p,t)}\leq 0$, and
\bea \label{eq-nash-monotone}  \frac{d}{d\tau} \mathcal{N}_{(p,t)}(\tau) \geq \min_{q\in M} R(q,t_0-\tau) -\frac{n}{2\tau}.\eea   

\bigskip

Next, we recall the well known fact (see e.g. \cite[Lemma 2.7]{bamler2020entropy}) that under Ricci flow the 1-Wasserstein distance between conjugate heat kernel measures is monotone,
namely
\bea
s\mapsto d_{W_1(g_s)}(\nu_{(p_1,t_1);s},\nu_{(p_2,t_2);s})\quad \textrm{ is nondecreasing}.
\eea
Here, by Kantorovich duality, the 1-Wasserstein distance between probability measures  is given by
\bea d_{W_1(g)}(\mu_1,\mu_2) = \sup \int_M f d\mu_1 -\int_M f d\mu_2,  \eea
where the supremum is taken over all bounded $1$-Lipschitz functions $f:(M,g)\rightarrow \mathbb{R}$.
Motivated by this, Bamler pointed out that instead of considering \emph{conventional parabolic balls}
 \bea P(p_0,t_0;r):= B_{g_{t_0}}(p_0;r)\times [t_0-r^2,t_0+r^2 ],\eea 
it is often more useful to consider so-called \emph{$P^*$ parabolic balls} defined by
	\bea P^*(p_0,t_0;r):= \big\{(p,t)\in M \times[t_0-r^2,t_0+r^2]\,:\, d_{W_1({g_{t_0-r^2}})}(\nu_{(p_0,t_0);t_0-r^2}, \nu_{(p,t);t_0-r^2})<r    \big\}.\eea 
By  \cite[Proposition 9.4]{bamler2020entropy}, $P^*$ parabolic balls satisfy similar containment principles as conventional parabolic balls, in particular:
	\begin{align}\label{eq_p_past_cont}
	(p_1,t_1)\in P^*(p_2,t_2;r)&\quad\Rightarrow\quad P^*(p_2,t_2;r)\subseteq P^*(p_1,t_1;2r),\nonumber\\
	(p_1,t_1)\in P^*(p_2,t_2;r)&\quad\Rightarrow\quad P^*(p_1,t_1,  r')\subseteq P^*(p_2,t_2;r+r').		
	\end{align}
Moreover, by \cite[Theorem 9.8]{bamler2020entropy}, there is some universal $C<\infty$, such that  if $[t_0-2r^2,t_0]\subseteq I$, then for all $t'\in [t_0-r^2, t_0+r^2]$ the volume of the time $t'$-slices is bounded by
\bea \label{vol_of_slices}\mathrm{Vol}_{g_{t'}}(P^*(p_0,t_0;r)\cap \{t=t'\})\le  C e^{\mathcal{N}_{(p_0,t_0)}(r^2)}r^n. \eea 
We will also need the covering result from \cite[Theorem 9.11]{bamler2020entropy}, which says that there is some universal constant $C<\infty$ with the following significance: If $[t_0-2r^2,t_0]\subseteq I$, then for any $X\subseteq P^*(p_0,t_0;r)$ and any $\lambda \in(0,1)$, we can find points $(q_1,s_1)$, $\dots$, $(q_N,s_N)$ in $X$ such that 
\bea\label{cov_lemma} X \subseteq \bigcup_{i=1}^N P^*(q_i,s_i;\lambda r)\quad \text{ and } \quad N\le C \lambda ^{-(n+2)}. \eea 
Now, assuming $[t_0-2r^2,t_0]\subseteq I$ and $\mathcal{N}_{(p_0,t_0)}(r^2)\geq -Y$, if we consider the quantitative singular set
\begin{equation}
\mathcal{S}_\eps = \{ (p,t) \, : \, r_{\mathrm{Rm}}(p,t)\leq \eps\},
\end{equation}
where $r_{\textrm{Rm}}(p,t)$ is the largest $r$ such that $|\mathrm{Rm}|\leq r^{-2}$ on $P_{-}(p,t;r)=B_{g_{t}}(p;r)\times [t_0-r^2,t_0]$, then by Bamler's quantitative parabolic $\ast$-Minkowski codimension-4 bound \cite[Theorem 1.26]{bamler2020structure} we can find points $(q_1,s_1)$, $\ldots$, $(q_N,s_N)$$\in$$ \mathcal{S}_\eps\cap P^\ast_{-}(p_0,t_0;r)$ such that \bea\label{bam_codim4} \mathcal{S}_\eps\cap P^\ast_{-}(p_0,t_0;r) \subseteq \bigcup_{i=1}^N P^*(q_i,s_i;\eps)\quad \text{ and } \quad N\le C \eps ^{-(n-2)-\delta}, \eea 
where $C<\infty$ is a constant that only depends on $n,Y,r$ and $\delta$. Note that for smooth flows we could equally well work with two-sided parabolic balls, but for the generalization to noncollapsed limit flows it is better to use backwards parabolic balls $P^*_{-}(p_0, t_0;r )=P^*(p_0, t_0;r )\cap\{t\leq t_0\}$.\\
Finally, in general there is no containment between $P$ and $P^\ast$ parabolic balls. However, if we assume $r_{\mathrm{Rm}}(p,t)\geq r$ then by \cite[Corollary 9.6]{bamler2020entropy} we have
\begin{equation}\label{p_past_cont}
P_-(p_0,t_0;\eta r)\subseteq P^\ast _-(p_0,t_0;r)\qquad \textrm{and} \qquad P^\ast_-(p_0,t_0;\eta r)\subseteq P _-(p_0,t_0;r),
\end{equation}
where $\eta>0$ is a universal constant.

\bigskip

Next, by an important discovery of Bamler \cite[Corollary 3.7]{bamler2020entropy}, under Ricci flow
\begin{equation}\label{var_mon}
s\mapsto \mathrm{Var}_{g_s}(\nu_{(p_1,t_1);s},\nu_{(p_2,t_2);s}) + H_n s\quad \textrm{ is nondecreasing}.
\end{equation}
Here, $H_n=  \pi^2(n-1) /2+4$, and the variance between two probability measure on $(M,g)$ is defined as 
\bea \mathrm{Var}_g(\mu_1,\mu_2) = \iint_{M\times M}  d_g^2(x_1,x_2) d\mu_1(x_1)d\mu_2(x_2). \eea
Motivated by this, as in \cite[Definition 3.10]{bamler2020entropy} a point $(q,s)$ is called an \emph{$H_n$-center} of $(p,t)$ if $s\le t$ and
\bea  \mathrm{Var}_{g_s} (\delta_q , \nu_{(p,t);s})\le H_n (t-s).\eea
As a direct consequence of \eqref{var_mon}, given any $(p,t)$ and $s\leq t$, there always exists at least one $H_n$-center $(q,s)$ of $(p,t)$ and the distance between any two such $H_n$-centers is bounded by 
\bea
d_{g_s}(q,q') \le 2\sqrt { H_n(t-s)}.
\eea
Moreover, as a direct consequence of the definitions for any $A<\infty$ one has
 \bea\label{concent_inequ} \nu_{(p,t);s}(B_{g_s}(q,\sqrt{AH_n (t-s})) \ge 1-A^{-1}.\eea 
Finally, in general there is no universal bound on the distance from $H_n$-centers to  the base point $p$. However, if we assume for instance $r_{\mathrm{Rm}}(p,t)\geq r$, then by \cite[Proof of Proposition 9.5]{bamler2020entropy} there is universal $C<\infty$, such that for all $H_n$-centers $(q,s)$ with $s\in [t-C^{-1}r^2,t)$ there holds 
\bea \label{eq-distH_n}d_{s} (q, p) \le C \sqrt {t-s} .\eea

\bigskip

To conclude this subsection, let us discuss heat kernel bounds. By  \cite[Theorem 7.2]{bamler2020entropy}, if $R\ge R_{\textrm{min}} $ and $[t-\tau,t] \subseteq I$, then for some $C=C(\tau \cdot R_{\textrm{min}})<\infty$ we have the upper bound
\bea K(p,t; q,t-\tau ) \le \frac{C}{\tau^{n/2}} e^{-\mathcal{N}_{(p,t)}(\tau) } e^{ -\frac{d_{t-\tau}(p_{t-\tau}, q)^2}{10\tau}}, \label{eq-HKupper}\eea 
	where $(p_{t-\tau},t-\tau)$ is any $H_n$-center of $(p,t)$. In general, there are no corresponding lower bounds.

 \bigskip

\subsection{Proof of the hitting estimate}

In this subsection, we prove Theorem \ref{thm_hitting_intro} (hitting estimate for the Ricci flow). By time translation and parabolic rescaling we may assume that $t_0=0$ and $r=1$, i.e. it suffices to prove:

\begin{theorem}[hitting estimate for the Ricci flow; restated]\label{thm_hitting_restated}
For all $Y<\infty$, $\delta>0$, and $r_0\in (0,1/2)$, there exists a constant $C=C(n,Y,\delta,r_0)<\infty$, such that if $(g_t)_{t\in (-2,0]}$ is a Ricci flow with $\mathcal{N}_{(p_0,0)}(r_0^2) \ge -Y$ and $r_{\mathrm{Rm}}(p_0,0)\geq r_0$, then Brownian motion $X_\tau$ starting at $(p_0,0)$ satisfies
\be\label{eq-hittingprobRF_restated} \mathbb{P}_{(p_0,0)}\big[\textrm{$X_\tau$ hits $\mathcal{S}_\eps \cap P^*(p_0,0; 1)$ for some $\tau \in[0,1]$}\big]\le C \eps^{2-\delta}\ee  	
for all $\eps>0$.
\end{theorem}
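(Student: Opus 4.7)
The plan is to combine Bamler's codimension-4 covering with a single-ball hitting estimate and a union bound. First, I would apply the quantitative parabolic $*$-Minkowski bound \eqref{bam_codim4} to cover $\mathcal{S}_\eps\cap P^*_-(p_0,0;1)$ by $N\leq C\eps^{-(n-2)-\delta/2}$ balls $B_i=P^*(q_i,s_i;\eps)$. A union bound then reduces the problem to proving the single-ball hitting estimate
$$\mathbb{P}_{(p_0,0)}\!\left[\text{BM hits }B_i\right]\le C(n,Y,r_0)\,\eps^{n-\delta/2},$$
uniformly in $i$, since summing over the cover yields the claimed $\eps^{2-\delta}$ bound.

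To estimate the hitting probability for a fixed $B_i$, I would use the standard occupation-time comparison. Setting $L_{B_i}:=\int_0^1\mathbf{1}_{B_i}(X_\tau)\,d\tau$, the strong Markov property applied at the hitting time gives
$$\mathbb{P}_{(p_0,0)}\!\left[\text{BM hits }B_i\right]\le\frac{\mathbb{E}_{(p_0,0)}[L_{B_i}]}{\inf_{y\in B_i^{\mathrm{up}}}\mathbb{E}_y[L_{B_i}]},$$
where $B_i^{\mathrm{up}}$ is the sub-ball consisting of points in the upper half of the time-range of $B_i$ (so that a full $\eps^2$ of future-to-past travel inside $B_i$ remains available). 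For the numerator, \eqref{def_Brownian_intro} rewrites $\mathbb{E}_{(p_0,0)}[L_{B_i}]=\int\nu_{(p_0,0);t}(B_i\cap\{t\})\,dt$, and combining the heat kernel upper bound \eqref{eq-HKupper} with the slice-volume estimate \eqref{vol_of_slices} I would obtain $\mathbb{E}_{(p_0,0)}[L_{B_i}]\le C\eps^{n+2}(-s_i)^{-n/2}$.

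The hard part, and precisely where the novel ingredient alluded to in the introduction is needed, is the lower bound $\inf_{y\in B_i^{\mathrm{up}}}\mathbb{E}_y[L_{B_i}]\geq c\eps^2$ in the absence of heat kernel lower bounds. To get this, I would exploit the geometry of $H_n$-centers: by the variance monotonicity \eqref{var_mon}, for any $y\in B_i^{\mathrm{up}}$ and $\tau\in[0,\eps^2/2]$ the conjugate heat kernel $\nu_{y;\mathfrak{t}(y)-\tau}$ stays close in 1-Wasserstein distance to $\nu_{(q_i,s_i);\mathfrak{t}(y)-\tau}$, so by \eqref{concent_inequ} it concentrates on an $O(\eps)$-neighborhood of an $H_n$-center of $(q_i,s_i)$. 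Using the containment principle \eqref{eq_p_past_cont} for $P^*$-balls, this concentration can be arranged to sit inside $B_i$, producing a definite lower bound for $\nu_{y;\mathfrak{t}(y)-\tau}(B_i)$ over a time window of length $\eps^2$, and hence the desired lower bound for the expected occupation time.

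Finally, to absorb the factor $(-s_i)^{-n/2}$, I would invoke $r_{\mathrm{Rm}}(p_0,0)\ge r_0$ together with \eqref{p_past_cont}: any $(q_i,s_i)\in\mathcal{S}_\eps$ with small $|s_i|$ cannot lie in $P_-(p_0,0;r_0/2)$, so either $|s_i|\ge cr_0^2$ (giving $(-s_i)^{-n/2}\le C(r_0)$) or $d_{g_{s_i}}(p_0,q_i)\gtrsim r_0$, in which case the Gaussian factor in \eqref{eq-HKupper} compensates for the $(-s_i)^{-n/2}$ blow-up. Combining the two cases, summing over the $N\le C\eps^{-(n-2)-\delta/2}$ balls in the cover, and redefining $\delta$ if needed yields the stated estimate.
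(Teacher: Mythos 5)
Your proposal takes a genuinely different route from the paper. You reduce to a single-ball hitting estimate via a straight union bound and the covering \eqref{bam_codim4}; the paper instead runs a second-moment/capacity argument \`a la Benjamini--Pemantle--Peres: it introduces the hitting distribution $\mu$ of the \emph{annulus} $\mathcal{A}=\{x\in P^*_-(p_0,0;1):\eps/2\le r_{\mathrm{Rm}}(x)\le\eps\}$, proves a Gaussian upper bound on $\mu$ via occupation times (Claim~\ref{claim_gaussian}), and then sandwiches the averaged quantity $\mathcal{I}$ between $C\eps^{n+2}\mu(\mathcal{A})$ and $C^{-1}\eps^2\sum_i\mu(P_i^*)^2$, closing with a minimal cover, bounded multiplicity, and Cauchy--Schwarz. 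Your approach is structurally simpler but loses the key feature the paper exploits: restricting to $\mathrm{spt}(\mu)\subseteq\mathcal{A}$ guarantees $r_{\mathrm{Rm}}=\eps$ there, so the clean $H_n$-center location estimate \eqref{eq-distH_n} applies directly when producing the occupation-time lower bound \eqref{eq-217'}.

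There are two concrete gaps in your outline. First, the inequality
$\mathbb{P}[\text{hit }B_i]\le\mathbb{E}[L_{B_i}]/\inf_{y\in B_i^{\mathrm{up}}}\mathbb{E}_y[L_{B_i}]$
is not valid as written: the strong Markov argument gives $\mathbb{E}[L_{B_i}]\ge\mathbb{P}[\text{hit }B_i]\cdot\inf_{y}\mathbb{E}_y[L_{B_i}]$ where the infimum must range over \emph{all} possible first-hit locations $y$, and Brownian motion can first enter a $P^*$-ball ``from the side'' at a time below $s_i$, hence outside $B_i^{\mathrm{up}}$, where there is essentially no time left inside $B_i$. The standard fix is to compare against the occupation time of a fattened ball $P^*(q_i,s_i;\Lambda\eps)$ for a universal $\Lambda$, so that any $y\in B_i$ has $\gtrsim\eps^2$ of time left in the larger time range; this leaves the exponent arithmetic unchanged. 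Second, and relatedly, your denominator lower bound $\inf_y\mathbb{E}_y[L_{B_i}]\ge c\eps^2$ should be for $L_{P^*(q_i,s_i;\Lambda\eps)}$: the $H_n$-center concentration \eqref{concent_inequ} plus $W_1$-monotonicity places mass of $\nu_{y;s'}$ inside $P^*(q_i,s_i;\Lambda\eps)$, not inside $B_i$ itself. Also be aware that the boundary points $y\in\partial B_i$ and the centers $(q_i,s_i)\in\mathcal{S}_\eps$ may have arbitrarily small $r_{\mathrm{Rm}}$, so \eqref{eq-distH_n} is \emph{not} available to you (unlike in the paper); your argument must therefore rely entirely on variance monotonicity, $d_{W_1}\le\mathrm{Var}^{1/2}$, and the definition of $P^*$-balls via the bottom time slice, which does work but must be spelled out carefully. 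Your absorption of $(-s_i)^{-n/2}$ via $r_{\mathrm{Rm}}(p_0,0)\ge r_0$, \eqref{p_past_cont}, and the Gaussian in \eqref{eq-HKupper} mirrors the paper's bound \eqref{bound_to_show_integrand} and is fine.
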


\begin{proof} To begin with, let us observe that since the flow is defined on the interval $(-2,0]$, the maximum principle for the evolution of scalar curvature under Ricci flow implies
\begin{equation}\label{eq_scalar_lower}
R\geq -n/2 \qquad \textrm{for}\qquad t\in[-1,0].
\end{equation}
Together with \eqref{eq-nash-monotone} and the assumption $\mathcal{N}_{(p_0,0)}(r_0^2) \ge -Y$ this yields
\begin{equation}\label{eq_nash_univlower}
 \mathcal{N}_{(p_0,0)}(1) \ge -C(r_0,Y).
\end{equation}
Hence, we have all the estimates from the previous subsection, which depend on a lower scalar bound and/or a lower entropy bound, at our disposal. In the following, we will simply write $C$ for constants that only depend on $n,Y,\delta$ and $r_0$, and are allowed to change from line to line. Also, we can assume throughout that $\eps\leq r_0/10$, since otherwise there is nothing to prove.\\

As above, denote by $X_\tau$ Brownian motion on our Ricci flow starting at $(p_0,0)$. Given any closed subset $\mathcal{A} \subseteq M \times [-1, 0]$, we consider the hitting time
\begin{equation}
 \tau_{\mathcal{A}} := \inf \{\tau >0\,:\, X_{\tau} \in \mathcal{A} \} \in [0,\infty].
\end{equation}
Note that $\tau_{\mathcal{A}}\wedge 1$ is a stopping time. Let $\mu$ be the distribution of $X_{\tau_{\mathcal{A}}\wedge 1}$, i.e. set
\be  \mu (\mathcal{A}') := \mathbb{P}_{(p_0, 0)} [X_{\tau_{\mathcal{A}}\wedge 1}  \in \mathcal{A}']\ee
for any Borel set ${\mathcal{A}}'\subseteq{\mathcal{A}}$. Observe that
\be\label{property_mu}\mathbb{P}_{(p_0, 0)}[ X_\tau  \in  \mathcal{A} \textrm{ for some } { 0\le \tau \le  1} ]=\mu(\mathcal{A}). \ee  
In the following, we write $\mathcal{A}'_s:=\mathcal{A}'\cap\{ t=s\}$ for the time-slices. Our first goal is to show:
\begin{claim}[hitting distribution]\label{claim_gaussian} The hitting distribution measure $\mu$ satisfies
 \begin{multline}\label{eq-222} \int_{-1}^0\int_{\mathcal{A}'_s} \int _{\mathcal{ A}\cap\{t\ge s\}}  \ K(p,t;q, s)\,d\mu(p,t)  \, d\mathrm{Vol}_{g_{s}}(q)\, ds \\
 \le \int_{-1}^{0}  \int_{\mathcal{A}'_s}\frac{C}{(-s)^{n/2}}  e^{ -\frac{  d_{s}(p_s, q)^2}{10(-s)}}\,  d\mathrm{Vol}_{g_{s}}(q)\, ds,\end{multline}
 where $(p_s,s)$ is any $H_n$-center of $(p_0,0)$.  
\end{claim}

\begin{proof}[{Proof of Claim \ref{claim_gaussian}}]
Consider the expected occupancy time
\bea \label{eq-25}
\mathbb{E}_{(p_0, 0)}\left[ \int_0^{1} 1_{\{X_{\tau}\in \mathcal{A}'\} } d\tau \right] = \int_{-1}^{0} \int_{\mathcal{A}'_s} K(p_0, 0; q, s)\, d\mathrm{Vol}_{g_{s}}(q)\, ds. 
\eea
By the upper heat kernel bound \eqref{eq-HKupper}, remembering also \eqref{eq_scalar_lower} and  \eqref{eq_nash_univlower}, we can estimate
 \bea \label{eq-221}  \mathbb{E}_{(p_0, 0)}\left[ \int_0^{1} 1_{\{X_{\tau}\in \mathcal{A}'\} } d\tau \right]
 &\le \int_{-1}^{0}\int_{\mathcal{A}'_{s}} \frac{C}{(-s)^{n/2}} e^{ -\frac{-d_{s}(
p_s, q)^2}{10(-s)}}\,   d\mathrm{Vol}_{g_{s}}(q)\,ds \,  
 \eea
where $(p_s,s)$ is any $H_n$-center of $(p_0,0)$. On the other hand, we can also compute the expected occupancy time of $\mathcal{A}'$ by conditioning on $X_{\tau_{\mathcal{A}} \wedge  1}$. Specifically, observing that $X_{(\tau_{\mathcal{A}}\wedge 1) +\tau}$ is a Brownian motion with initial distribution $\mu$, and using the strong Markov property, we infer that
\bea\mathbb{E}_{(p_0, 0)}\left[ \int_0^{1} 1_{\{X_{\tau}\in \mathcal{A}'\} } d\tau \right] \ge  \int _{\mathcal{ A}} \int_{-1}^t\int_{\mathcal{A}'_s} K(p,t;q, s)\,  d\mathrm{Vol}_{g_{s}}(q)\, ds \, d\mu(p,t). \eea 
Changing the order of integration, and combining the above inequalities, the claim follows.
 \end{proof} 

\bigskip 

We now fix
\bea
\mathcal{A}:= \{x\in P^*_{-}(p_0, 0;1): \eps/2\leq r_{\mathrm{Rm}}(x)\leq \eps\}.
\eea
Since $r_{\mathrm{Rm}}(p_0,0) \geq 10\e$ at the initial point, and $r_{\mathrm{Rm}}= \e$ on the support of $\mu$, we see that
\bea\label{what_we_compute}\mu (\mathcal{A})=\mathbb{P}_{(p_0,0)}\big[\text{$X_\tau$ hits $\mathcal{S}_\eps  \cap P^*(p_0, 0; 1) $ for some $\tau \in[0,1]$}\big] .\eea 
In the standard proof in the elliptic setting,  see e.g. our prior paper \cite{ChoiHaslhofer1}, the next step would be to estimate the capacity-type integral $\iint_{A\times A} K d\mu d\mu$, which however only works if $A$ is a subset of a fixed space. In our current space-time setting, we consider instead the averaged quantity
\bea  \mathcal{I}:=\label{eq-234-0_av}
\int_{\mathcal{A}} \int_{P^*_{-}(q, s; 4\eta \varepsilon )} \int_{\mathcal{A}\cap\{t\ge s'\}}   \ K(p,t;q' ,s') \,d\mu(p,t) \, d\mathrm{Vol}_{g_{s'}}(q' )  ds' \, d\mu(q, s)  ,\eea where $\eta>0$ is a small constant to be chosen below.
Using Claim \ref{claim_gaussian} (hitting distribution) we can estimate
 \bea  \label{eq-234-0}\mathcal{I}\le\int_{\mathcal{A}} \int_{P^*_{-}(q, s;4\eta \varepsilon )} \frac{C}{(-s')^{n/2}}  e^{ -\frac{  d_{s'}(p_{s'} , q' )^2}{10(-s')}}  d\mathrm{Vol}_{g_{s'}}(q' ) ds' \, d\mu(q, s),\eea
 where $(p_{s'},s')$ is any $H_n$-center of $(p_0,0)$ as above.  To proceed, we observe that if $(q,s)\in\textrm{spt}(\mu)$ and $(q',s')\in P^*_{-}(q, s;4\eta \varepsilon )$, then fixing $\eta=\eta(n)$ small enough we have the bound 
\bea \label{bound_to_show_integrand} \frac{1}{(-s')^{n/2} }e^{-\frac{d_{s'}(p_{s'} ,q')^2}{10(-s')}} \leq  C. \eea
Indeed, for sufficiently small $\eta$, if $-s'\leq \eta r_0^2$ then using in particular \eqref{p_past_cont}  and \eqref {eq-distH_n} we see that $d_{s'}(p_{s'},q')\geq \eta r_0$, and consequently the left hand side of \eqref{bound_to_show_integrand} is bounded by some $C=C(r_0)<\infty$. On the other hand,  if $-s'\geq \eta r_0^2$ then the left hand side is clearly bounded by $(\eta r_0)^{-n/2}$.
Together with the bound  \eqref{vol_of_slices} for the volume of $P^\ast$ parabolic balls, this yields
\bea  \mathcal{I}\le C \varepsilon^{n+2}\mu(\mathcal{A}).\label{I_upper}
\eea 

\bigskip

Next, we would like to bound our quantity $\mathcal{I}$ from below, by estimating the contribution close to the diagonal. Specifically, let us consider $P^*_{i}=P^*(	p_i,t_i; \eta \varepsilon )$ for some $(p_i,t_i) \in \mathcal{A}$. Recall that if $(p,t)\in \mathrm{spt}(\mu)$, then $r_{\mathrm{Rm}}(p,t)=\e $. Together with \eqref{eq-distH_n}, we thus infer that there is some universal $A\in (1,\infty)$ with the following significance: If $(p,t), (q,s) \in P^*_i \cap \mathrm{spt}(\mu)$ satisfy $t\leq s$, then for each  $s'\in  [{t}-A^{-1}(\eta \varepsilon) ^2,t]$ there is  an $H_n$-center  $(p_{s'},s')$ of $(p,t)$ such that
\bea
B_{g_{s'}}(p_{s'},\sqrt{{2}H_n(t-s')})\subseteq P^*_{-}(q, s;4\eta \varepsilon ).
\eea
Combined with \eqref{concent_inequ} this implies
\bea \label{eq-217'}\int_{P^*_{-}(q, s;4\eta \varepsilon )} \ K(p,t;q',s')  \, d\mathrm{Vol}_{g_{s'}}(q') \, ds' \ge \frac{1}{2} \int^t_{ t-A^{-1}(\eta \varepsilon )^2}  ds' \geq C^{-1} \varepsilon^2.\eea 
This yields
\begin{multline} \label{eq-215lowerbound}\int_{P^*_i} \int_{P^*_{-}(q, s;4\eta \varepsilon )}\int_{P^*_i\cap\{t\ge s'\}} \ K(p,t;q',s')  \,d\mu(p,t)\, d\mathrm{Vol}_{g_{s'}}(q') ds' \, d\mu(q, s) \\ 
\geq C^{-1}\e^2 \int _{P^*_i}\int_{P^*_i} 1_{\{t\leq  s\}} d\mu(p,t)d\mu(q,s)  \geq C^{-1}\frac{\eps^2}{2} \mu (P_i^*)^2. \end{multline} 

\bigskip

Now, let $P^*_i=P^*(p_i,t_i; \eta \eps )$, where $(p_i,t_i)\in\mathcal{A}$ for $i=1,\ldots,N$, be a covering of $\mathcal{A}$ with minimal covering number $N=N(\mathcal{A},\eta \eps)$, i.e. 
\bea N = \min\left\{\,n\,:\, \text{ there are } (p_1,t_1),\ldots,(p_n,t_n) \ \in \mathcal{A} \text{ s.t. }  \mathcal{A}\subseteq \bigcup_i P^*(p_i,t_i;\eta \eps ) \right\}. \eea
Observe that, thanks to minimality, the covering multiplicity is uniformly bounded. 
Indeed, if $P^*(p_{i_1},t_{i_1};\eta \eps )$, $\dots$, $P^*(p_{i_{m}},t_{i_m};\eta  \eps )$ from a minimal covering  intersect at some point $(p,t)$, then by the containment relations \eqref{eq_p_past_cont}, these $P^\ast$ parabolic balls are contained in $P^*(p,t; 2\eta \varepsilon )$, and together with the covering result from \eqref{cov_lemma} this implies that $m$ is bounded by some universal constant.  Together with \eqref{eq-215lowerbound} we thus infer that
\bea
\mathcal{I}\geq C^{-1}\eps^2\sum_{i=1}^{N}\mu(P_i^\ast)^2.
\eea
Combined with the elementary inequality
\bea \mu(\mathcal{A})^2 \le \left( \sum _{i=1}^N \mu(P^*_i)\right)^2  \le N  \sum_{i=1}^N  \mu(P^*_i)^2,
\eea  
and the upper bound  from \eqref{I_upper}, this yields
\bea \label{eq-313}\mu(\mathcal{A}) \leq C N \eps^n .\eea
Finally, by Bamler's quantitative parabolic $\ast$-Minkowski codimension-4 bound from \eqref{bam_codim4} we have
\bea
N \leq C \eps^{-(n-2)-\delta},
\eea
and remembering \eqref{what_we_compute} we thus conclude that
\bea
\mathbb{P}_{(p_0,0)}\big[\text{$X_\tau$ hits $\mathcal{S}_\eps  \cap P^*(p_0, 0; 1) $ for some $\tau \in[0,1]$}\big]\leq C \eps^{2-\delta}.
\eea  
This finishes the proof of the theorem.
\end{proof}

\bigskip

\begin{corollary}[occupancy time]\label{cor_occ_time}
Under the same assumption as in Theorem \ref{thm_hitting_restated}, we have 
\bea  \mathbb{E}_{(p_0,0)} \left[ \int_0^1 1_{\{X_\tau \in \mathcal{S}_\eps\cap P^*(p_0, 0;1) \}}d\tau \right] \le C(n,Y,\delta,r_0 ) \e ^{4-\delta }   .\eea

\begin{proof} By definition of Brownian motion it holds that
\bea 
\mathbb{E}_{(p_0, 0)}\left[ \int_0^{1} 1_{\{X_\tau \in \mathcal{S}_\eps \cap P^*(p_0, 0;1) \}} d\tau \right] = \int_{\mathcal{S}_\eps\cap P^*_{-}(p_0, 0;1) } K(p_0, 0; q, s)\, d\mathrm{Vol}_{g_{s}}(q)\, ds. 
\eea
Similarly as in \eqref{bound_to_show_integrand} we have the estimate
\bea
\sup_{(q,s)\in \mathcal{S}_\eps\cap P^*_{-}(p_0, 0;1)} K(p_0, 0; q, s)\leq C.
\eea
Now, by Bamler's quantitative parabolic $\ast$-Minkowski codimension-4 bound from \eqref{bam_codim4} the set  $\mathcal{S}_\eps \cap P^*_{-}(p_0, t_0;1)$ can be covered by $C \e ^{-n+2-\delta }$ number of $P^*$-parabolic balls of radius $\eps$ centered at $(q_i,s_i)\in \mathcal{S}_\eps\cap P^*_{-}(p_0, 0;1)$. Moreover, by \eqref{vol_of_slices}
 the space-time volume of each $P^\ast$ parabolic ball in the covering is bounded by $C \e ^{n+2}$. Combining the above facts yields the assertion.
\end{proof}
 
\end{corollary}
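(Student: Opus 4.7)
The plan is to convert the expected occupancy time into a space-time integral of the heat kernel over $\mathcal{S}_\eps \cap P^*_{-}(p_0,0;1)$, and then bound this integral by combining a uniform pointwise bound on the kernel with the space-time volume bound implied by Bamler's codimension-4 covering.

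First I would use the definition \eqref{def_Brownian_intro} of Brownian motion together with Fubini's theorem to rewrite the expected occupancy time as
\bea
\int_{\mathcal{S}_\eps \cap P^*_{-}(p_0,0;1)} K(p_0,0;q,s)\, d\mathrm{Vol}_{g_s}(q)\, ds,
\eea
where the backwards-in-time restriction is automatic since $X_\tau$ visits only space-time points with $t \le 0$. Next I would establish a uniform pointwise bound $K(p_0,0;q,s) \le C(n,Y,r_0)$ on the domain of integration. Starting from the Gaussian upper bound \eqref{eq-HKupper} and absorbing the factor $e^{-\mathcal{N}_{(p_0,0)}(-s)}$ into the constant via \eqref{eq_nash_univlower}, this reduces to controlling $(-s)^{-n/2} e^{-d_s(p_s,q)^2/(10(-s))}$. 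I would then run the same dichotomy already used to prove \eqref{bound_to_show_integrand}: for $-s \le \eta r_0^2$ the assumption $r_{\mathrm{Rm}}(p_0,0)\ge r_0$ together with the $H_n$-center estimate \eqref{eq-distH_n} forces the Gaussian factor to beat the $(-s)^{-n/2}$ blow-up, while for $-s \ge \eta r_0^2$ the prefactor is itself bounded by a constant depending only on $r_0$.

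With the heat kernel controlled uniformly, the problem reduces to bounding the space-time volume of $\mathcal{S}_\eps \cap P^*_{-}(p_0,0;1)$. Here I would apply Bamler's quantitative parabolic $\ast$-Minkowski codimension-4 bound \eqref{bam_codim4} to cover this set by at most $N \le C\eps^{-(n-2)-\delta}$ many $P^*$-parabolic balls of radius $\eps$ centered at points of the set. Each such ball has space-time volume at most $C\eps^{n+2}$, since by \eqref{vol_of_slices} every time-slice has volume at most $C\eps^n$ (using $\mathcal{N}\le 0$ to drop the entropy factor), integrated over a time extent of length $2\eps^2$. Multiplying the number of balls by the per-ball volume and by the uniform heat kernel bound then yields $C \cdot \eps^{-(n-2)-\delta} \cdot \eps^{n+2} = C\eps^{4-\delta}$, as claimed.

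The only step that requires any care is the uniform heat kernel bound, where the $r_{\mathrm{Rm}}(p_0,0) \ge r_0$ hypothesis is essential to prevent the kernel from blowing up as $s \to 0$; everything else is a bookkeeping application of the covering and volume estimates already assembled in the proof of Theorem \ref{thm_hitting_restated}.
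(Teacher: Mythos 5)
Your proposal is correct and follows essentially the same route as the paper: rewrite the expected occupancy time as a space-time integral of the heat kernel, bound the kernel uniformly on $\mathcal{S}_\eps \cap P^*_{-}(p_0,0;1)$ by the same dichotomy used for \eqref{bound_to_show_integrand}, and then multiply the number of covering balls from \eqref{bam_codim4} by the per-ball space-time volume from \eqref{vol_of_slices}.
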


\bigskip

\section{Ricci limit flows and weak solutions}\label{sec_weak}

\subsection{Preliminaries on Ricci limit flows}\label{subsec_prelim_limit_flows}

As in \cite[Definition 3.2]{bamler2020compactness} a \emph{metric flow over $I\subseteq\mathbb{R}$}, 
\begin{equation}
\mathcal{X}=\left(\mathcal{X},\mathfrak{t},(d_t)_{t\in I},(\nu_{x;s})_{x\in \mathcal{X}, s\in I,s\leq \mathfrak{t}(x)}\right),
\end{equation}
consists of a set $\mathcal{X}$, a time-function $\mathfrak{t}:\mathcal{X}\to \mathbb{R}$, complete separable metrics $d_t$ on the time-slices $\mathcal{X}_t=\mathfrak{t}^{-1}(t)$, and probability measures $\nu_{x;s}\in \mathcal{P}(\mathcal{X}_s)$, such that:
\begin{itemize}
\item $\nu_{x;\mathfrak{t}(x)}=\delta_x$ for all $x\in \mathcal{X}$, and for all $t_1\leq t_2\leq t_3$ in $I$ and all $x\in\mathcal{X}_{t_3}$ we have the Kolmogorov consistency condition
\begin{equation}\label{Kolm_cons}
\nu_{x; t_1} = \int_{\mathcal{X}_{t_2}} \nu_{\cdot; t_1}\, d\nu_{x; t_2} .
\end{equation}
\item For all $s<t$ in $I$, any $T>0$, and any $T^{-1/2}$-Lipschitz function $f_s:\mathcal{X}_s\to\mathbb{R}$, setting $v_s=\Phi\circ f_s$, where $\Phi:\mathbb{R}\to (0,1)$ denotes the antiderivative of $(4\pi)^{-1}e^{-x^2/4}$, the function
\begin{equation}\label{grad_est_Bam}
v_t:\mathcal{X}_t\to \mathbb{R},\qquad x \mapsto \int_{\mathcal{X}_s} v_s \, d\nu_{x; s}
\end{equation}
is of the form $v_t=\Phi\circ f_t$ for some $(t-s+T)^{-1/2}$-Lipschitz function $f_t:\mathcal{X}_t\to\mathbb{R}$.
\end{itemize}
In particular, on any metric flow we always have a \emph{heat flow} of integrable functions and a \emph{conjugate heat flow} of probability measures, which are defined for $s\leq \mathfrak{t}(x)$ via the formulas
\begin{equation}\label{heat_flow_def}
v_{\mathfrak{t}(x)}(x):= \int_{\mathcal{X}_s} v_s \, d\nu_{x; s},\qquad
\mu_s:= \int_{\mathcal{X}_t} \nu_{x; s} \, d\mu_{\mathfrak{t}(x)}(x)\, .
\end{equation}
We recall from \cite[Definition 3.30 and Definition 4.25]{bamler2020compactness} that a metric flow $\mathcal{X}$ is called \emph{$H$-concentrated}  if for all $s\leq t$ in $I$ and all $x_1,x_2\in \mathcal{X}_t$ it holds that
\begin{equation}
\textrm{Var}(\nu_{x_1; s}, \nu_{x_2; s})\leq d_t^2(x_1,x_2)+H(t-s),
\end{equation}
and is called \emph{future continuous at $t_0\in I$} if for all conjugate heat flows $(\mu_t)_{t\in I'}$ with finite variance and $t_0\in I'$, the function
$
t\mapsto \int_{\mathcal{X}_t}\int_{\mathcal{X}_t} d_t\, d\mu_t \, d\mu_t
$
is right continuous at $t_0$.\\

As in \cite[Definition 5.1]{bamler2020compactness} a \emph{metric flow pair} over an interval $I$,  consists of a metric flow $\mathcal{X}$ over $I'\subseteq I$ with $|I\setminus I'|=0$, and a conjugate heat flow $(\mu_t)_{t\in I'}$ on $\mathcal{X}$ with $\textrm{spt}(\mu_t)=\mathcal{X}_t$ for all $t\in I'$.\\

Now, any sequence $(M^i,(g^i_t)_{t\in I^i},p^i)$ of pointed Ricci flows on closed $n$-dimensional manifolds, where $I^i=(-T^i,0]$ for ease of notation, can be viewed as sequence of metric flow pairs by considering the associated metric flows $\mathcal{X}^i=M^i \times I^i$
and the conjugate heat flows $(\mu_t^i)=(\nu_{(p^i,0);t})_{t\in I^i}$. By Bamler's compactness theory \cite{bamler2020compactness} after passing to a subsequence we have $\mathbb{F}$-convergence on compact time intervals to a metric flow pair $(\mathcal{X},(\nu_{x_\infty;t})_{t\in (-T_\infty,0]})$, where $\mathcal{X}$ is a future continuous, $H_n$-concentrated metric flow of full support over $(-T_\infty,0]$, and $T_\infty=\lim_{i\to \infty} T^i\in (0,\infty]$.\\

We will assume throughout that the sequence of Ricci flows is \emph{noncollapsed}, namely that there are constants $\tau_0>0$ and $Y_0<\infty$ such that
\begin{equation}
\mathcal{N}_{(p_i,0)}(\tau_0)\geq -Y_0.
\end{equation}
Then, by Bamler's partial regularity theory \cite{bamler2020structure} we have the decomposition
\begin{equation}
\mathcal{X}\setminus\{x_\infty\}=\mathcal{R}\cup \mathcal{S}
\end{equation}
into regular and singular part, where the singular part $\mathcal{S}$ has parabolic $\ast$-Minkowski dimension at most $n-2$. Furthermore, the $\mathbb{F}$-convergence is smooth on the regular part $\mathcal{R}$, and the regular part can be equipped with a unique structure of a Ricci flow space-time,
\begin{equation}
\mathcal{R}=(\mathcal{R},\mathfrak{t},\partial_{\mathfrak{t}},g),
\end{equation}
 as introduced by Kleiner-Lott \cite{KL1}. Hence, $\mathcal{R}$ is a smooth $(n+1)$-manifold, the time-function $\mathfrak{t}:\mathcal{R}\to (-T_\infty,0)$ is smooth without critical points, $\partial_{\mathfrak{t}}$ is a vector field on $\mathcal{R}$ satisfying $\partial_{\mathfrak{t}} {\mathfrak{t}} =1$, and $g=(g_t)_{t\in (-T_\infty,0)}$ is a smooth inner product on $\textrm{ker}(d\mathfrak{t})\subset T\mathcal{R}$ satisfying the Ricci flow equation
 \begin{equation}\label{Ricci_flow_fancy_eq}
 \mathcal{L}_{\partial_{\mathfrak{t}}} g = -2 \textrm{Ric}(g).
 \end{equation}

\bigskip

\subsection{Brownian motion and stochastic parallel transport}\label{sec_BM_stoch}

In this subsection, we explain that every noncollapsed Ricci limit flow can be canonically equipped with a notion of Brownian motion and stochastic parallel transport.
In the following $\mathcal{X}$ denotes any noncollapsed Ricci limit flow, as in the previous subsection. Recall in particular that its regular part $\mathcal{R}\subset \mathcal{X}$ has the structure of a Ricci flow space-time.

\begin{definition}[Brownian motion]\label{def_BM_lim}
\emph{Brownian motion} $\{X_\tau\}_{\tau \in[0,T_\infty-|\mathfrak{t}(x)|)}$ starting at $x\in \mathcal{X}$ is defined by
\begin{equation}\label{def_Brownian}
\mathbb{P}_x[ X_{\tau_1} \in B_1,\ldots,X_{\tau_k} \in B_k] 
=  \int_{B_1\times \ldots\times B_k}d\nu_{x;\mathfrak{t}(x)-\tau_1}(x_1)\ldots d\nu_{x_{k-1};\mathfrak{t}(x)-\tau_k}(x_k),
\end{equation}
for any Borel sets $B_i\subseteq \mathcal{X}_{\mathfrak{t}(x)-\tau_i}$ and any times $0\leq \tau_1<\ldots< \tau_k< T_\infty-|\mathfrak{t}(x)|$.
\end{definition}

Thanks to the Kolmogorov consistency condition \eqref{Kolm_cons}, there  indeed exists a unique such probability measure by the Kolmogorov extension theorem. A priori the probability measure is defined on the infinite product space $\prod_{\tau\in [0,T_\infty-|\mathfrak{t}(x)|) }\mathcal{X}_{\mathfrak{t}(x)-\tau}$, but we will see momentarily that for $x\in\mathcal{R}$ it actually concentrates on the space of continuous space-time curves that stay entirely in the regular part.\\
Note that in the  proof of Theorem \ref{thm_main_intro} (hitting estimate for the Ricci flow) we only used the relation between the Wiener measure and the heat kernel, which now holds true by Definition \ref{def_BM_lim} (Brownian motion), and Bamler's estimates that we recalled in Section \ref{subsec_prelim_limit_flows}, which as explained in \cite{bamler2020compactness,bamler2020structure} hold for limit flows as well. Let us elaborate on a few technical points: The lower scalar bound \eqref{eq_scalar_lower} was only used to derive the Nash entropy bound \eqref{eq_nash_univlower} and to get a uniform constant in the heat kernel upper bound \eqref{eq-HKupper}. In the setting of this subsection, one has instead a lower scalar bound along the sequence of smooth flows, and can then pass the Nash entropy bound and the heat kernel upper bound to the limit flow using the definition of $\mathbb{F}$-convergence and \cite[Theorem 1.11]{bamler2020structure}. Furthermore, recall that we defined $r_{\mathrm{Rm}}$ by taking the supremum over backwards parabolic balls $P_{-}(p,t;r)$, which is slightly more restrictive than the definition of $r_{\mathrm{Rm}}'$ used in \cite[Theorem 1.31]{bamler2020structure}. Hence, \eqref{bam_codim4} indeed holds for noncollapsed limit flows. \\
In particular, for any $x\in\mathcal{R}$ we obtain
\begin{equation}
\mathbb{P}_x\big[ \textrm{$X_{\tau}$ hits $\mathcal{S}$ for some $\tau\in [0,T_\infty-|\mathfrak{t}(x)|)$} \big] = 0.
\end{equation}
Hence, the process stays entirely in $\mathcal{R}$ and can be described in terms of the smooth geometry of $\mathcal{R}$. In particular, almost surely $X_\tau$ is a continuous space-time curve satisfying $\mathfrak{t}(X_\tau)=\mathfrak{t}(x)-\tau$.\\

Our next goal is to construct stochastic parallel transport, by adapting the construction from \cite{HaslhoferNaber} to the setting of Ricci flow space-times.
Let $Y$ be a spatial vector field over $\mathcal{R}$, and let $x\in\mathcal{R}$. The \emph{covariant spatial derivative} in direction $X\in T_x\mathcal{R}_{\mathfrak{t}(x)}$ is defined as
\begin{equation}
\nabla_XY=\nabla_X^{g_{\mathfrak{t}(x)}} Y,
\end{equation}
using the Levi-Civita connection of the metric $g_{\mathfrak{t}(x)}$. Define the \emph{covariant time derivative} by
\begin{equation}
\nabla_{\mathfrak{t}} Y = \partial_{\mathfrak{t}} Y +\tfrac{1}{2} \mathcal{L}_{\partial_{\mathfrak{t}}} g(Y,\cdot)^{\sharp_{g}},
\end{equation}
and observe that with this definition the connection is metric, namely $\tfrac{d}{dt}|Y|^2_{g}=2\langle Y,\nabla_{\mathfrak{t}}Y\rangle$.
Next, consider the $\mathrm{O}_n$-bundle $\pi: \mathcal{F}\to \mathcal{R}$ whose fibres $\mathcal{F}_x$ are given by the orthogonal maps $u:\mathbb{R}^n\to (T_x\mathcal{R}_{\mathfrak{t}(x)} , g_{\mathfrak{t}(x)})$, and where $\mathrm{O}_n$ acts from the right via composition.
For any spatial vector $X\in T_x\mathcal{R}_{\mathfrak{t}(x)}$ its horizontal lift $X^\ast$ is simply given as horizontal lift with respect to Levi-Civita connection of the metric $g_{\mathfrak{t}(x)}$. In particular, we have $n$ canonical horizontal vector fields
\begin{equation}
H_i(u)=(ue_i)^\ast,
\end{equation}
where $u\in \mathcal{F}$, and $e_1,\ldots,e_n$ denotes the standard basis in $\mathbb{R}^n$. Furthermore, denote by $D_{\mathfrak{t}}$ the horizontal lift of the time vector field $\partial_{\mathfrak{t}}$.
Similarly as in \cite[Lemma 3.1 and 3.3]{HaslhoferNaber} covariant derivatives of spatial tensor fields on $\mathcal{R}$ can be expressed in terms of horizontal derivatives of the associated equivariant functions on the frame bundle. For example, identifying spatial vector fields $Y$ on $\mathcal{R}$ with equivariant functions $\tilde{Y}:\mathcal{F}\to \mathbb{R}^n$ via $\tilde{Y}(u)=u^{-1} Y({\pi u})$, we have
\begin{equation}\label{eq_lift_dt}
 \widetilde{\nabla_{\mathfrak{t}} Y} = D_{\mathfrak{t}} \tilde{Y} .
\end{equation}
Now, given any initial frame $u\in \mathcal{F}_x$, there exists a unique horizontal lift $U_\tau$ of $X_\tau$, i.e. a horizontal process $U_\tau$  starting at $U_0=u$ such that $\pi(U_\tau)=X_\tau$. Concretely, using the Eells-Elworthy-Malliavin formalism, similarly as in \cite[Section 3.2]{HaslhoferNaber}, this process is given as the solution of the stochastic differential equation
\begin{equation}\label{SDE}
dU_\tau= -D_{\mathfrak{t}}(U_\tau)d\tau + \sum_{i=1}^n H_i(U_\tau)\circ dW^i_\tau,\qquad U_0=u,
\end{equation}
where $\circ d$ denotes the Stratonovich differential, and we use the normalization
\begin{equation}
dW^i_\tau dW^j_\tau =2\delta_{ij}d\tau.
\end{equation}
Since we have seen above that $X_\tau$ stays entirely in the regular part $\mathcal{R}=\pi(\mathcal{F})$, the solution of \eqref{SDE} does not explode, i.e. we have $U_\tau\in \mathcal{F}$ for all $\tau \in[0,T_\infty-|\mathfrak{t}(x)|)$.

\begin{definition}[stochastic parallel transport]\label{def_stoch_par}
The family of isometries
\begin{equation}
P_\tau:=U_0U_\tau^{-1}: T_{X_\tau} \mathcal{R}_{\mathfrak{t}(x)-\tau}\to T_x \mathcal{R}_{\mathfrak{t}(x)},
\end{equation}
where $U_\tau$ is the horizontal lift of $X_\tau$, is called \emph{stochastic parallel transport}.
\end{definition}

Note that, by equivariance under the $\mathrm{O}_n$-action, $P_\tau$ does not depend on the choice of $u\in \mathcal{F}_x$.

\bigskip

\subsection{Gradient estimate on path space}In this final subsection, we prove that every noncollapsed Ricci limit flow $\mathcal{X}$ is a weak solution in the sense of Haslhofer-Naber.
Recall that a \emph{cylinder function} is a function of the form
\begin{equation}
F(X)=f(X_{\tau_1},\ldots, X_{\tau_k}),
\end{equation}
where  $f: \mathcal{X}_{\mathfrak{t}(x)-\tau_1}\times \ldots\times \mathcal{X}_{\mathfrak{t}(x)-\tau_k}$ is a Lipschitz function with compact support, for some given times $0\leq \tau_1 < \ldots < \tau_k < T_{\infty}-|\mathfrak{t}(x)|$. The \emph{parallel gradient} $\nabla^\parallel F(X)\in T_x \mathcal{R}_{\mathfrak{t}(x)}$ is defined by
\begin{equation}\label{def_par_grad}
\nabla^\parallel F(X) = \sum_{i=1}^k P_{\tau_i} \mathrm{grad}^{(i)}_{g_{\mathfrak{t}(x)-\tau_i}} f(X_{\tau_1},\ldots, X_{\tau_k}),
\end{equation}
where $\mathrm{grad}^{(i)}$ denotes the gradient with respect to the $i$-th entry, and $P_{\tau_i}: T_{X_{\tau_i}} \mathcal{R}_{\mathfrak{t}(x)-\tau_i}\to T_x \mathcal{R}_{\mathfrak{t}(x)}$ denotes stochastic parallel transport (see Definition \ref{def_stoch_par}). The goal of this subsection is to prove:

\begin{theorem}[gradient estimate]
For any $x\in \mathcal{R}$ we have the gradient estimate
\begin{equation}
\left|\mathrm{grad}_{g_{\mathfrak{t}(x)}} \mathbb{E}_x [F] \right| \leq \mathbb{E}_x \left[ |\nabla^\parallel F| \right],
\end{equation}
for all cylinder functions $F$.
In particular, $\mathcal{X}$ is a weak solution of the Ricci flow in the sense of Haslhofer-Naber.
\end{theorem}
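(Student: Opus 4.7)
The plan is to adapt the martingale method of \cite{HaslhoferNaber} to the limit flow setting. By the Markov property built into Definition \ref{def_BM_lim} (a direct consequence of the Kolmogorov consistency condition), I would first reduce to the case of a $1$-point cylinder function $F(X)=f(X_\tau)$ by induction on $k$: conditioning on $X_{\tau_1}$ turns the inner expectation into the same statement on the shifted flow, and the remaining gradient in $X_{\tau_1}$ is controlled by the $k=1$ bound together with Cauchy--Schwarz inside the expectation. In the base case, $\mathbb{E}_x[F]=v(x)$, where $v$ is the heat flow of $f$, which is smooth on $\mathcal{R}$ since the $\mathbb{F}$-convergence is smooth on the regular part.

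The heart of the argument is the Feynman--Kac-type identity
\begin{equation}\label{FK_plan}
\mathrm{grad}_{g_{\mathfrak{t}(x)}} v(x) = \mathbb{E}_x\!\left[ P_\tau \, \mathrm{grad}_{g_{\mathfrak{t}(x)-\tau}} v(X_\tau) \right].
\end{equation}
Once \eqref{FK_plan} is established, the gradient estimate follows immediately: $P_\tau$ is an isometry and $\mathrm{grad}\, v$ restricted to $\mathcal{X}_{\mathfrak{t}(x)-\tau}$ equals $\mathrm{grad}\, f$, so moving absolute values inside the expectation yields the claimed inequality. To prove \eqref{FK_plan}, I would lift the Brownian path to the frame bundle via \eqref{SDE} and consider the equivariant $\mathbb{R}^n$-valued function $\widetilde{\mathrm{grad}\, v}:\mathcal{F}\to \mathbb{R}^n$. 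A computation in the Eells--Elworthy--Malliavin formalism, using \eqref{eq_lift_dt} and the standard commutator $(\p_t-\Delta)\nabla v=\mathrm{Rc}\cdot \nabla v$ for solutions of the heat equation under Ricci flow \eqref{Ricci_flow_fancy_eq}, shows that the drift of $\widetilde{\mathrm{grad}\, v}(U_\tau)$ vanishes exactly. Hence $\tau\mapsto P_\tau \mathrm{grad}\, v(X_\tau)$ is a local martingale for as long as $X_\tau$ remains in $\mathcal{R}$, which by Corollary \ref{cor_bm_lim} is almost surely always.

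The main obstacle is upgrading ``local martingale'' to an honest identity, since $|\mathrm{grad}\, v|$ may blow up as $X_\tau$ approaches the singular set $\mathcal{S}$. I would handle this by a cutoff: for each $\eps>0$, choose $\chi_\eps$ on $\mathcal{X}$ equal to $1$ outside $\mathcal{S}_{2\eps}$ and vanishing on $\mathcal{S}_\eps$, built from an optimal $P^\ast$-ball cover of $\mathcal{S}_\eps\cap P^\ast_{-}(x;r)$ as in \eqref{bam_codim4}. Applying It\^o to $\chi_\eps(X_\tau) P_\tau \mathrm{grad}\, v(X_\tau)$ on $[0,\tau]$ produces a genuine martingale plus two error terms. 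The bulk error involves a curvature factor of order $\eps^{-2}$ times the occupancy time of $X_\tau$ in $\mathcal{S}_{2\eps}$, which by Corollary \ref{cor_occ_time} is $O(\eps^{4-\delta})$, yielding total contribution $O(\eps^{2-\delta})$. The boundary contribution from paths that first hit $\mathcal{S}_\eps$ before time $\tau$ is bounded using Theorem \ref{thm_hitting_restated} together with an interior parabolic $L^\infty$ bound on $|\mathrm{grad}\, v|$ on $\{\chi_\eps>0\}$ (where the regular scale is bounded below by $\eps$). Both errors tend to zero as $\eps\to 0$, giving \eqref{FK_plan}, and the reduction of the first paragraph then identifies $\mathcal{X}$ as a weak solution in the sense of \cite{HaslhoferNaber}.
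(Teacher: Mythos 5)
Your proposal follows essentially the same route as the paper: reduce to $k=1$ by Markov-property induction, establish the Feynman--Kac identity $\mathrm{grad}\,v(x)=\mathbb{E}_x[P_{\tau_1}\mathrm{grad}\,f]$ by lifting to the frame bundle, cut off near $\mathcal{S}_\eps$, and let $\eps\to 0$ using the occupancy and hitting estimates. One small internal inconsistency is worth flagging: you assert that $|\mathrm{grad}\,v|$ may blow up near $\mathcal{S}$ and invoke an $\eps$-dependent interior bound on $\{\chi_\eps>0\}$, but then claim the bulk It\^o error is only of order $\eps^{-2}$; that quadratic rate implicitly uses a \emph{uniform} bound on $|\mathrm{grad}\,v|$. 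The paper resolves this cleanly by observing that the sharp gradient estimate \eqref{grad_est_Bam} built into the metric flow axioms already gives $|\mathrm{grad}\,v|\leq C$ globally (while only $|\nabla^2 v|$ degenerates like $\eps^{-1}$ by interior estimates), so the error bound $|E_\eps|\lesssim \eps^{-2}\mathbf{1}_{\{X_\tau\in\mathcal{S}_\eps\setminus\mathcal{S}_{\eps/2}\}}$ holds and the occupancy estimate closes the argument at rate $\eps^{2-\delta}$. Your version still closes because the extra factor of $\eps^{-1}$ is absorbed by the $O(\eps^{4-\delta})$ occupancy bound, so this is a matter of sharpness rather than correctness, but the uniform Lipschitz input from the metric flow definition is the cleaner and intended route. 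The induction step, as you note, follows the scheme of \cite{HaslhoferNaber} (and uses Jensen rather than Cauchy--Schwarz to pass from the representation formula to the inequality).
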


\begin{proof}
Suppose first $k=1$. Then, by the definition of Brownian motion from \eqref{def_Brownian} the expectation on the left hand side is given by the heat flow, namely
\begin{equation}
 \mathbb{E}_x [F] = v(x),
 \end{equation}
 where $v$ is the heat flow from \eqref{heat_flow_def} with initial condition $f$ at time $\mathfrak{t}(x)-\tau_1$.
Observe that the gradient of $v$ satisfies
\begin{equation}\label{ev_eq_grad}
\nabla_{\mathfrak{t}} \textrm{grad}_{g} v = \Lap_g \textrm{grad}_{g} v
\end{equation}
on $\mathcal{R}\cap \mathfrak{t}^{-1}( (\mathfrak{t}(x)-\tau_1,\mathfrak{t}(x)])$, by virtue of the Ricci flow equation \eqref{Ricci_flow_fancy_eq}. The key to proceed is the following claim:

\begin{claim}[Feynman-Kac type representation formula]\label{claim_FK}
For any $x\in\mathcal{R}$ we have
\begin{equation}
\mathrm{grad}_{g_{\mathfrak{t}(x)}} v (x) = \mathbb{E}_x \left[ P_{\tau_1} \mathrm{grad}_{g_{\mathfrak{t}(x)-\tau_1}} f\right] .
\end{equation}
\end{claim}

\begin{proof}[Proof of the claim]
Set $Y=\textrm{grad}_{g} v$, and consider the associated equivariant function $\tilde{Y}(u)=u^{-1} Y({\pi u})$. Using \eqref{eq_lift_dt} we see that the lift of the evolution equation  \eqref{ev_eq_grad} is given by
\begin{equation}\label{equat_lifted}
D_{\mathfrak{t}} \tilde{Y} = \Lap_H \tilde{Y},
\end{equation}
where $\Lap_H=\sum_{i=1}^n H_iH_i$ denotes the horizontal Laplacian.\\
Now, for any $\eps>0$, as before denote by $\mathcal{S}_\eps\subseteq \mathcal{X}$ the space-time points with curvature scale less than $\eps$. Let $\eta_\eps:\mathcal{X}\to [0,1]$ be a cutoff function with $\eta_\eps = 1$ on $\mathcal{X}\setminus \mathcal{S}_\eps$ and  $\eta_\eps = 0$ on $\mathcal{S}_{\eps/2}$, and such that
\begin{equation}\label{cut_off_bound}
\eps |\nabla \eta_\eps  | + \eps^2 |\nabla^2 \eta_\eps | + \eps^2 |\partial_{\mathfrak{t}}\eta_\eps |\leq C.
\end{equation}
Set $\tilde{\eta}_\eps:=\eta_\eps\circ \pi$, and consider the truncated function
\begin{equation}
\tilde{Y}^\eps := \tilde{\eta}_\eps \tilde{Y}.
\end{equation}
Similarly as in \cite[Proof of Proposition 3.7]{HaslhoferNaber} the Ito formula on the frame bundle takes the form
\begin{equation}
d\varphi(U_\tau)= \sum_{i=1}^n H_i \varphi(U_\tau) dW_\tau^i - D_{\mathfrak{t}}\varphi(U_\tau) d\tau + \Lap_H \varphi(U_\tau)d\tau.
\end{equation}
Moreover, by the Lipschitz estimate from \eqref{grad_est_Bam} and standard interior estimates we have
\bea
|Y|+\eps |\nabla Y|\leq C.
\eea
Hence, using the equations \eqref{equat_lifted} and \eqref{cut_off_bound} from above, we infer that
\begin{equation}
d\tilde{Y}^\eps(U_\tau)= \mathrm{martingale} + E_\eps\,   d\tau,
\end{equation}
where the error term satisfies
\begin{equation}\label{eq-Eest}
|E_\eps| \leq \frac{C}{\eps^2}\,  1_{\{X_\tau \in \mathcal{S}_\eps\setminus \mathcal{S}_{\eps/2}\}}.
\end{equation}
This implies
\begin{equation}
\left|\tilde{Y}^\eps(u)-\mathbb{E}_u\big[\tilde{Y}^\eps(U_{\tau_1})\big] \right|\leq \frac{C}{\eps^2} \mathbb{E}_x\left[ \int_0^{\tau_1} 1_{\{X_\tau \in \mathcal{S}_\eps\setminus \mathcal{S}_{\eps/2}\}} d\tau \right].
\end{equation}
By Corollary \ref{cor_occ_time} (occupancy time) we have
\begin{equation}
\mathbb{E}_x\left[ \int_0^{\tau_1} 1_{\{X_\tau \in \mathcal{S}_\eps\setminus \mathcal{S}_{\eps/2}\}} d\tau \right]\leq C\eps^{4-\delta}.
\end{equation}
Moreover, using again Theorem \ref{thm_hitting_intro} (hitting estimate for the Ricci flow), and remembering also the Lipschitz estimate from \eqref{grad_est_Bam}, we see that
\begin{equation}
\lim_{\eps\to 0}\mathbb{E}_u\big[\tilde{Y}^\eps(U_{\tau_1})\big]= \mathbb{E}_u\big[\tilde{Y}(U_{\tau_1})\big].
\end{equation}
Also, since $u\in \mathcal{F}_x$, where $x\in\mathcal{R}$, we have 
\begin{equation}
\lim_{\eps\to 0} \tilde{Y}^\eps(u)=\tilde{Y}(u).
\end{equation}
Combining the above fact, we conclude that
\begin{equation}
\tilde{Y}(u)=\mathbb{E}_u\big[\tilde{Y}(U_{\tau_1})\big].
\end{equation}
Pushing down via $\pi$, this establishes the claim.
\end{proof}

Continuing the proof of the theorem, by Claim \ref{claim_FK} (Feynman-Kac type representation formula) and the definition of the parallel gradient from \eqref{def_par_grad} we thus have
\begin{equation}\label{gradient_form_cyl}
\mathrm{grad}_{g_{\mathfrak{t}(x)}} \mathbb{E}_x [F] = \mathbb{E}_x \left[ \nabla^\parallel F \right],
\end{equation}
provided  $F$ is a $1$-point cylinder function. Arguing by by induction on $k$, similarly as in \cite[Proof of Theorem 4.2]{HaslhoferNaber}, where we now use Claim \ref{claim_FK} (Feynman-Kac type representation formula) instead of \cite[Proposition 3.36]{HaslhoferNaber}, we see that the gradient formula \eqref{gradient_form_cyl} holds for $k$-point cylinder functions as well. This implies the assertion of the theorem.
\end{proof}

\bibliography{ChoiHaslhofer.bib}
\bibliographystyle{alpha}

\bigskip

{\sc Beomjun Choi, Department of Mathematics, POSTECH, 77 Cheongam-Ro, Nam-Gu, Pohang, Gyeongbuk, Korea 37673}\\

{\sc Robert Haslhofer, Department of Mathematics, University of Toronto,  40 St George Street, Toronto, ON M5S 2E4, Canada}\\

\end{document}